\definecolor{rouge}{rgb}{0.85,0.1,.4}
\definecolor{bleu}{rgb}{0.1,0.2,0.9}
\definecolor{violet}{rgb}{0.7,0,0.8}
\definecolor{noir}{rgb}{0,0,0}
\newcommand{\norm}[1]{\Vert#1\Vert} % 
\newcommand{\abs}[1]{\vert#1\vert} % 
\newcommand{\dpl}{\partial}
\def\noqed{\renewcommand{\qedsymbol}{}} % to delete \qed locally!
\newcommand{\bra}{\langle}
\newcommand{\ket}{\rangle}
\DeclareMathOperator{\Ldroit}{L}
\newcommand{\Poisson}[1]{\mathcal{P}\!#1}
\newcommand{\indicator}{\mathds{1}}
\newcommand{\indic}{\indicator}
\newcommand{\OrEdges}{\textrm{\upshape E}} %usual notation
\newcommand{\Vertices}{X}
\DeclareMathOperator{\cardinal}{card}
\newcommand{\card}[1]{\cardinal(#1)} % 
\DeclareMathOperator{\cC}{C} % Continuous functions
\newcommand{\cM}{\mathcal{M}} % Space of Measures
\newcommand{\B}{\bar{B}} % Quotient Busemann
\DeclareMathOperator{\dEdroit}{d}
\newcommand{\ddroit}{\dEdroit\!}
\DeclareMathOperator{\Aut}{Aut}
\DeclareMathOperator{\Isom}{Isom}
\newcommand{\St}{\textbf{St}}
\DeclareMathOperator{\SL}{SL}
\DeclareMathOperator{\homology}{H}
\renewcommand{\H}{\homology}
\DeclareMathOperator{\vol}{vol}
\newcommand{\C }{\mathbf{C}} %%%%%%%%%
\newcommand{\N }{\mathbf{N}}
\newcommand{\Q }{\mathbf{Q}} %
\newcommand{\R }{\mathbf{R}} %
\newcommand{\Z}{\mathbf{Z}}%
\theoremstyle{definition}
\newtheorem{theorem}{Theorem}
\newtheorem{question}[theorem]{Question}
\newtheorem{proposition}[theorem]{Proposition}
\newtheorem{corollary}[theorem]{Corollary}
\newtheorem{lemma}[theorem]{Lemma}
\theoremstyle{definition}
\newtheorem{definition}[theorem]{Definition}
\theoremstyle{definition}
\newtheorem{remark}[theorem]{Remark}
\begin{document}
\title{Norm growth for the Busemann cocycle}% valued in the Steinberg representation}

%%% Author 
\author{Thibaut Dumont}
\address{Department of Mathematics, University of Utah\\ Salt Lake City, 84112-0090, UT, USA}
\email{thibaut.dumont@math.ch}

%%%% Abstract %%%%
\begin{abstract} Using explicit methods, we provide an upper bound to the norm of the Busemann cocycle of a locally finite regular tree $X$, emphasizing the symmetries of the cocycle. The latter takes value into a submodule of square summable functions on the edges of $X$, which corresponds the Steinberg representation for rank one groups acting on their Bruhat-Tits tree. The norm of the Busemann cocycle is asymptotically linear with respect to square root of the distance between any two vertices. Independently, Gournay and Jolissaint \cite{GournayJolissaint} proved an exact formula for harmonic 1-cocycles covering the present case.
\end{abstract}

%\date{\today}

\maketitle
\setcounter{tocdepth}{1} %Depth of the table of content
{ %This ensures the table of content to stay black even if its entries are links
\hypersetup{linkcolor=black}
\tableofcontents
}

%%%%%%%%%%% Introduction %%%%%%%%%%%%

\section{Introduction}
We present a method from the author's doctoral dissertation \cite{DumontThese}. Therein, the study of the norm growth of the cocycles introduced by Klingler in \cite{KlinglerVolume} is transported, in rank one, to the geometric Question \ref{question1} below.

Let $q\geq 2$ be an integer and $X$ be a $q+1$ regular tree with vertex set also denoted $\Vertices$ by abuse, edge set $\OrEdges$, and visual boundary $\dpl X$. The Busemann cocycle $B\colon X^2\to \cC(\dpl X)$ is given by the usual formula
\begin{equation}\label{eq:Busemann}
B(x,y)(\xi)=\lim_{z\to \xi} d(y,z)-d(x,z),
\end{equation}
where $d$ is the metric on $X$ giving length 1 each edge.
Let $\B$ denote the composition with the quotient map $\cC(\dpl X)\to \cC(\dpl X)/\C$ modding out constant functions. 

In a context of Bruhat-Tits building \cite{KlinglerPoisson}, Klingler introduces a transform named after Poisson which, in the present setting, is a map $\Poisson{\,}\colon\cC(\dpl X)\to \C^\OrEdges$ defined by integration against an $\Aut(X)$-equivariant field $\nu\colon\OrEdges\to\cM(\dpl X)$ of signed measures on~$\dpl X$:
\begin{equation}\label{eq:3}
\Poisson{\phi}(e)\coloneqq\int_{\dpl X} \phi \ddroit\nu_e,
\end{equation}
where $\phi\in\cC(\dpl X)$ and $e\in\OrEdges$, (see Section~\ref{subsection:VisualEdge} for precise definitions).
The Poisson transform is $\Aut(X)$-equivariant, factors through $\cC(\dpl X)/\C$, and maps locally constant functions
into $\ell^2(\OrEdges)$ (Proposition \ref{prop:6}). This was first proved by Klingler for Bruhat-Tits trees \cite{KlinglerPoisson}, where a similar statement is proved for more general Bruhat-Tits buildings in relation to the Steinberg representation as explained in the motivations below.

\begin{question}\label{question1} Find an upper bound for the norm $\norm{\Poisson{\B(x,y)}}_{\ell^2(\OrEdges)}$ depending only on $x,y$ (and $q$).
\end{question}
The present paper exposes the solution developed in \cite{DumontThese}*{Chap.\ 4} which emphasis the symmetries of the cocycle and hopes to inspire an approach to the higher rank case started in \cite{DumontThese}*{Chap.\ 2--3}.

For a fixed $q$, the norm depends only on $d(x,y)$ thanks to the equivariance of the construction, and one may ask to determine the asymptotic growth type of $\norm{\Poisson{\B(x,y)}}$ as $d(x,y)\to\infty$. The difficulty lies into the search of an upper bound. In fact, we prove:

% MAIN THM
\begin{theorem} \label{MainThm} For every integer $q\geq 2$, there are constants $C,K>0$ such that 
\begin{equation*}
4\,d(x,y)\leq\norm{\Poisson{\B(x,y)}}^2\leq Cd(x,y)+K, 
\end{equation*}
for all $x,y\in X$, with constants given by: 
\begin{equation*}
C=\frac{8(q+1)^2}{(q-1)^2}\quad\text{ and }\quad K=\frac{16q^2(2q+1)}{(q-1)^3(q+1)}.
\end{equation*}
\end{theorem}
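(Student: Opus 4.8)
The plan is to compute $\Poisson{\B(x,y)}$ explicitly as an element of $\ell^2(\OrEdges)$ and then estimate its squared norm edge by edge. By the equivariance of the whole construction, it suffices to treat the case where $x$ and $y$ lie on a fixed geodesic segment of length $n=d(x,y)$; write $x=x_0,x_1,\dots,x_n=y$ for the consecutive vertices. The first step is to reduce the Busemann cocycle to a telescoping sum along this geodesic, using the cocycle identity $\B(x,y)=\sum_{i=0}^{n-1}\B(x_i,x_{i+1})$ together with linearity of the Poisson transform, so that $\Poisson{\B(x,y)}=\sum_{i=0}^{n-1}\Poisson{\B(x_i,x_{i+1})}$. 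Each summand is the Poisson transform of a single-step Busemann function, which by the explicit description of the field $\nu$ (Section~\ref{subsection:VisualEdge}) should be an explicitly computable, finitely-and-geometrically-supported function on $\OrEdges$.

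Next I would record the explicit formula for the elementary function $u_i\coloneqq\Poisson{\B(x_i,x_{i+1})}\in\ell^2(\OrEdges)$. I expect its value on an edge $e$ to depend only on the combinatorial position of $e$ relative to the oriented edge $(x_i,x_{i+1})$: a constant times a weight that decays like $q^{-\mathrm{dist}}$ as one moves into the branches hanging off the two endpoints, reflecting the measures $\nu_e$. This is where the symmetries emphasized in the introduction enter: each $u_i$ is, up to translating by the tree automorphism carrying the base edge to the $i$-th edge, a single fixed profile $u_0$. Consequently $\norm{u_i}^2$ is independent of $i$, giving the diagonal contribution $\sum_i\norm{u_i}^2=n\,\norm{u_0}^2$, and I would compute $\norm{u_0}^2$ by summing a geometric series over the edges of the tree, which yields the leading linear coefficient.

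The main work, and the expected obstacle, is controlling the cross terms $\sum_{i\neq j}\langle u_i,u_j\rangle$ in the expansion $\norm{\Poisson{\B(x,y)}}^2=\sum_i\norm{u_i}^2+\sum_{i\neq j}\langle u_i,u_j\rangle$. Here I would exploit that $u_i$ and $u_j$ are translates of the same profile by automorphisms moving the base edge a distance $|i-j|$ along the geodesic, so that $\langle u_i,u_j\rangle$ depends only on $|i-j|$; call it $c_{|i-j|}$. Computing $c_k$ again reduces to a geometric sum over edges, and I expect $|c_k|$ to decay geometrically in $k$ (like $q^{-k}$ times a polynomial factor), because the supports of $u_i$ and $u_j$ overlap only in the shrinking neighborhood between the $i$-th and $j$-th edges. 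The total cross-term contribution is then $2\sum_{k=1}^{n-1}(n-k)c_k$, which splits into a linear part $2n\sum_{k\geq1}c_k$ (adjusting the leading constant $C$) and a bounded remainder $-2\sum_{k\geq1}k\,c_k$ plus the tail error from truncating at $k=n-1$; geometric decay makes both of these absolutely convergent, producing the additive constant $K$.

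Finally I would assemble the pieces: the leading coefficient is $C=\norm{u_0}^2+2\sum_{k\geq1}c_k$ and the constant $K$ collects the convergent correction series, and I would verify by explicit bookkeeping of the geometric sums that these match the stated values $C=8(q+1)^2/(q-1)^2$ and $K=16q^2(2q+1)/((q-1)^3(q+1))$. The lower bound $4\,d(x,y)\leq\norm{\Poisson{\B(x,y)}}^2$ I would obtain more cheaply, by discarding all but a controlled subset of the edges: restricting the $\ell^2$ norm to the $n$ edges of the geodesic itself (or a canonical positive sub-collection on which the profiles reinforce rather than cancel) should already give a sum of $n$ terms each bounded below by $4$, so the hard analytic work is entirely on the upper bound and, within it, on the summation of the cross terms $c_k$.
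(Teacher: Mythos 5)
Your strategy is correct in outline, but it is genuinely different from the paper's. The paper never telescopes: it fixes $(x,y)$, parametrizes every edge by its position relative to $[x,y]$ (Section \ref{subsection:parametrization}), derives closed integration formulae for the individual values $\Poisson{B(x,y)}(e)$ (Propositions \ref{prop:SumCase1} and \ref{prop:SumCase2}), exploits the central symmetry about the barycenter of $[x,y]$ (Proposition \ref{prop:symmetries}) to average two symmetric formulae into a manifestly non-negative spherical rearrangement, and only then sums the resulting per-edge bounds \eqref{eq:parai} and \eqref{eq:paraij} against the edge counts of Lemma \ref{lem:6}. Your route --- writing $\B(x,y)=\sum_{i=0}^{n-1}\B(x_i,x_{i+1})$, observing that the one-step transforms $u_i$ are translates of a single profile, and expanding $\norm{\sum_i u_i}^2=n\norm{u_0}^2+2\sum_{k=1}^{n-1}(n-k)c_k$ --- is sound: homogeneity of the regular tree supplies translations along a line extending $[x,y]$, so $\norm{u_i}$ is constant and $\bra u_i,u_j\ket$ depends only on $\abs{i-j}$; the one-step profile is explicitly computable (on the $m$-th edge of the line, in its preferred orientation, $u_i$ takes the value $2q^{-\abs{m-i}}$, with the same geometric decay into the hanging branches), and the overlap of two such profiles at distance $k$ gives $c_k=O(q^{-k})$, so your split into a linear term and a convergent remainder is legitimate. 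Carried to completion your method yields an exact formula of the shape $C'd-K'(1-q^{-d})$: it essentially re-derives the Gournay--Jolissaint theorem quoted after Theorem \ref{MainThm} (they obtain it via invertibility of the discrete Laplacian), which is stronger than the two-sided estimate. What the paper's heavier route buys instead is per-edge control and symmetry/rearrangement techniques designed to transport to higher-rank buildings, where no such clean stationarity computation is available.

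One step of your plan needs correcting: you expect the final bookkeeping to ``match the stated values'' of $C$ and $K$. It will not, and should not --- those constants are non-sharp by-products of the paper's averaging estimates, and your exact computation will produce a strictly smaller leading coefficient (the cross terms $c_k$ turn out to be negative, consistently with $K'>0$ in the exact formula). To prove the theorem verbatim you must replace ``match'' by a one-sided comparison: verify $C'\leq C$, after which the upper bound is immediate since the remainder $-K'(1-q^{-d})$ is negative. Your lower-bound plan works and in fact simplifies the paper's: in telescoped form every summand is positive on the geodesic edges, so $\Poisson{\B(x,y)}(e_m)=2\sum_{i=0}^{n-1}q^{-\abs{i-m}}\geq 2$, and summing the squares over the $d(x,y)$ geodesic edges in their preferred orientation already gives $4\,d(x,y)$; the paper obtains the same per-edge bound $\geq 2$ from its rearrangement (Proposition \ref{prop:Pinorm}) and then applies Cauchy--Schwarz to conclude.
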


In an independent work, Gournay and Jolissaint obtained a formula for the norm of harmonic cocycles \cite{GournayJolissaint}*{Theorem 1.2} which subsume our estimate. Indeed, since the average of $B(x,y)$ over the neighbors $y$ of $x$ is proportional to the indicator function on $\dpl X$, the cocycle $\B:X^2\to\cC(\dpl X)/\C$ is harmonic\footnote{This is distinct from the fact that $\sum_{o(e)=x}\nu_e=0$, which implies that $\Poisson$ ranges in the space of harmonic functions on $\OrEdges$, see Section \ref{subsection:VisualEdge}.}, {\it  i.e.\ }satisfies $\sum_{y\sim x} \B(x,y)=0$. Therefore $\Poisson{\B}$ yields an harmonic 1-cocycle of $\Aut(X)$ for its regular representation into $\ell^2(\OrEdges)$. Viewing it as an inhomogeneous 1-cocycle,  their result implies
\begin{theorem}[Gournay-Jolissaint \cite{GournayJolissaint}*{Theorem 1.2}] For every integer $q\geq 2$, there are constants $C',K'>0$ such that 
\begin{equation*}
\norm{\Poisson{\B(x,y)}}^2 = C'd(x,y)-K'(1-q^{-d(x,y)}),
\end{equation*}
for all $x,y\in X$.
\end{theorem}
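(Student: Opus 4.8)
The plan is to reduce the computation to the geometry of a single geodesic and then evaluate a Gram matrix whose entries decay geometrically in $q$. Since $\norm{\Poisson{\B(x,y)}}$ depends only on $d\coloneqq d(x,y)$ by $\Aut(X)$-equivariance, I would fix a geodesic segment $x=x_0\sim x_1\sim\cdots\sim x_d=y$ contained in a bi-infinite geodesic and choose a hyperbolic element $\gamma\in\Aut(X)$ of translation length $1$ with axis that geodesic and $\gamma x_i=x_{i+1}$. The cocycle identity gives $\B(x,y)=\sum_{i=0}^{d-1}\B(x_i,x_{i+1})$, and applying the equivariant map $\Poisson$ (Proposition~\ref{prop:6}) yields $\Poisson{\B(x,y)}=\sum_{i=0}^{d-1}f_i$ in $\ell^2(\OrEdges)$, where $f_i\coloneqq\Poisson{\B(x_i,x_{i+1})}$. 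Equivariance of $B$ and of $\Poisson$ then give $f_i=\lambda(\gamma)^i f_0$, where $\lambda$ denotes the resulting unitary $\Aut(X)$-action on $\ell^2(\OrEdges)$.

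Because $\lambda(\gamma)$ is unitary and the $f_i$ are real-valued, the numbers $a_k\coloneqq\langle f_0,\lambda(\gamma)^k f_0\rangle$ satisfy $\langle f_i,f_j\rangle=a_{j-i}$ and $a_{-k}=a_k$. Expanding the square produces the clean combinatorial identity
\begin{equation*}
\norm{\Poisson{\B(x,y)}}^2=\sum_{i,j=0}^{d-1}a_{j-i}=d\,a_0+2\sum_{k=1}^{d-1}(d-k)\,a_k ,
\end{equation*}
so everything reduces to understanding the correlation sequence $(a_k)$.

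The heart of the argument, and the main obstacle, is the \emph{exact} evaluation of $a_k$. Here I would compute the single-edge transform $f_0=\Poisson{\B(x_0,x_1)}$ explicitly as a function on $\OrEdges$ by integrating the two-valued representative of $B(x_0,x_1)$ against the signed-measure field $\nu$ over the shadows of edges (Section~\ref{subsection:VisualEdge}), and then sum the resulting products over $\OrEdges$ to obtain $\langle f_0,\lambda(\gamma)^k f_0\rangle$. Since the branching number is $q$ and the shadows contract by a factor $q$ at each step away from the axis, the overlap of $f_0$ with its translate decays geometrically; the claim to establish is that
\begin{equation*}
a_k=B\,q^{-\abs{k}}\quad(k\neq 0),\qquad a_0=A,
\end{equation*}
for explicit constants $A,B$. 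Pinning the ratio down to be exactly $q^{-1}$, rather than merely bounding it as suffices for Theorem~\ref{MainThm}, is precisely what separates the exact formula from our estimate and is where the bulk of the bookkeeping lies; an alternative and perhaps cleaner route is to identify $a_k$ with hitting/escape probabilities of the simple random walk on $X$, for which $q^{-\abs{k}}$ is the natural decay.

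Finally I would sum the series. Writing $r\coloneqq q^{-1}$, the standard finite geometric and arithmetic-geometric sums give
\begin{equation*}
d\,a_0+2\sum_{k=1}^{d-1}(d-k)B r^k=\left(A+\frac{2Br}{1-r}\right)d-\frac{2Br}{(1-r)^2}\bigl(1-r^d\bigr),
\end{equation*}
where the coefficient of $d\,r^d$ cancels identically. This is exactly the asserted form $C'd-K'(1-q^{-d})$ with $C'=A+\tfrac{2Br}{1-r}$ and $K'=\tfrac{2Br}{(1-r)^2}$; the cases $d=1,2$ serve as a direct consistency check, and $C'>4$ is forced by the lower bound of Theorem~\ref{MainThm}.
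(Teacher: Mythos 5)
Your reduction is sound: the cocycle identity plus equivariance do give $\Poisson{\B(x,y)}=\sum_{i=0}^{d-1}\lambda(\gamma)^i f_0$, the Gram expansion $\norm{\Poisson{\B(x,y)}}^2=d\,a_0+2\sum_{k=1}^{d-1}(d-k)a_k$ is correct, and your closing arithmetic--geometric summation is right, including the cancellation of the $d\,q^{-d}$ term. But the proposal has a genuine gap exactly where the theorem's content lies: the identity $a_k=B\,q^{-\abs{k}}$ for $k\neq 0$ is asserted, not proved --- you yourself label it ``the claim to establish'' and ``where the bulk of the bookkeeping lies.'' Everything else in your argument is soft (exactness of the cocycle identity, unitarity, geometric series), and the exact geometric decay of the correlations is, modulo your own Gram expansion, \emph{equivalent} to the Gournay--Jolissaint formula: writing $N(d)=\norm{\Poisson{\B(x,y)}}^2$, one has $N(d+1)-2N(d)+N(d-1)=2a_d$, so deducing the decay of $a_k$ from the shape $C'd-K'(1-q^{-d})$ would be circular, and conversely your plan proves the theorem only once that decay is established independently. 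The suggested identification of $a_k$ with random-walk hitting probabilities is a heuristic, not a substitute for the computation. Note also that this is a different route from the cited proof, which the paper says rests on inverting the discrete Laplacian on harmonic cochains; your approach is more elementary but stands or falls with the unproven correlation lemma.

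The good news is that the missing lemma is true and provable with the machinery already in the paper, so your plan is viable rather than wrong. Applying Proposition \ref{prop:SumCase1} with $d=1$ (for $d=1$ every edge is aligned, so Proposition \ref{prop:SumCase2} is not needed) gives $f_0$ in closed form: on the preferred orientation of an edge at signed position $i$ relative to the base edge one finds $f_0(e)=2q^{-\abs{i}}$. From this one computes $a_0=\norm{f_0}^2=\tfrac{8(q+1)}{q-1}$, and, using $N(2)$ (where the off-axis contributions vanish because $f(1)=0$ in \eqref{eq:finalPj}), $a_1=a_0/q$; continuing, $a_k=a_0q^{-\abs{k}}$, which via your summation yields $C'=\tfrac{8(q+1)^2}{(q-1)^2}$ --- reassuringly equal to the constant $C$ of Theorem \ref{MainThm} --- and $K'=\tfrac{16q(q+1)}{(q-1)^3}$, consistent with $N(1)=a_0$. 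To complete your proof you must establish $a_k=a_0q^{-\abs{k}}$ for \emph{all} $k$, not just check small cases: concretely, expand $a_k=\sum_{e\in\OrEdges}f_0(e)f_0(\gamma^{-k}e)$, organize the sum by the position of $e$ relative to the two base edges (on the axis versus hanging at distance $j$ off a vertex of the axis), and sum the resulting double geometric series; until that lemma is written out, the proposal is an outline with its central step missing.
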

The discrete Laplacian plays a central role in establishing the above formula as it is invertible in regular trees.

\subsection{On the proof of Theorem \ref{MainThm}} \label{Ontheproof} Let $e\in\OrEdges$ be an oriented edge of $X$ and let $\Aut(X)_e$ denote its stabilizer. The measure $\nu_e$ is defined on the partition $\dpl X=\Omega_e^+\sqcup\Omega_e^-$ into $\Aut(X)_e$-orbits as the difference $\nu^+_e-\nu^-_e$ of the probability measures supported on $\Omega_e^+$ and $\Omega_e^-$ respectively and proportional to a fixed visual measure. For definiteness we take $\Omega^-_e$ to be the shadow of $t(e)$, the target of $e$, casted by light emanating from $o(e)$, the origin of $e$ as in Figure \ref{fig:edge}. One should think of $e$ as being the neck of an hourglass with sand flowing from $\Omega^+_e$ through $e$ in the direction of $\Omega_e^-$.

%%%%% FIGURE %%%%%%%
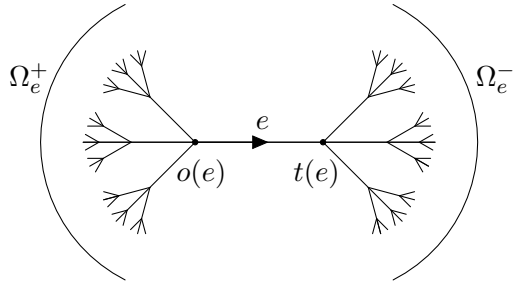
\begin{figure}[htbp]
\centering
\begin{tikzpicture}[line cap=round,line join=round,>=triangle 45,x=.850cm,y=.850cm]
\clip(-1,-2.2) rectangle (7,2.2);
\draw [->] (2.,0.) -- (3.15,0.);
\draw (1.2928932188134525,0.7071067811865475)-- (2.,0.);
\draw (1.2928932188134525,-0.7071067811865475)-- (2.,0.);
\draw (1.2928932188134525,0.7071067811865475)-- (0.9359996966841342,1.0640003033158656);
\draw (0.9359996966841342,1.0640003033158656)-- (0.7625631329235418,1.237436867076458);
\draw (1.2928932188134525,-0.7071067811865475)-- (0.9359996966841342,-1.0640003033158656);
\draw (0.9359996966841342,-1.0640003033158656)-- (0.7625631329235418,-1.237436867076458);
\draw (0.25,0.)-- (5.753703705166133,0.);
\draw (0.8053676011387001,0.8377388767319816)-- (1.2928932188134525,0.7071067811865475);
\draw (1.1622611232680182,1.1946323988613)-- (1.2928932188134525,0.7071067811865475);
\draw (0.562896489139237,0.252361829659192)-- (1.,0.);
\draw (0.562896489139237,-0.252361829659192)-- (1.,0.);
\draw (0.8053676011387001,-0.8377388767319816)-- (1.2928932188134525,-0.7071067811865475);
\draw (1.1622611232680182,-1.1946323988613)-- (1.2928932188134525,-0.7071067811865475);
\draw (0.350480947161671,0.375)-- (0.562896489139237,0.252361829659192);
\draw (0.350480947161671,-0.375)-- (0.562896489139237,-0.252361829659192);
\draw (0.5684488490966514,0.901221065013438)-- (0.8053676011387001,0.8377388767319816);
\draw (1.0987789349865618,1.4315511509033485)-- (1.1622611232680182,1.1946323988613);
\draw (0.5684488490966514,-0.901221065013438)-- (0.8053676011387001,-0.8377388767319816);
\draw (1.0987789349865618,-1.4315511509033485)-- (1.1622611232680182,-1.1946323988613);
\draw (0.9888245595074259,1.368068962621892)-- (1.1622611232680182,1.1946323988613);
\draw (1.2257433115494745,1.4315511509033485)-- (1.1622611232680182,1.1946323988613);
\draw (0.9359996966841342,1.0640003033158656)-- (0.6990809446420855,1.127482491597322);
\draw (0.9359996966841342,1.0640003033158656)-- (0.8725175084026778,1.3009190553579142);
\draw (0.5684488490966513,0.7742566884505253)-- (0.8053676011387001,0.8377388767319816);
\draw (0.6319310373781077,1.011175440492574)-- (0.8053676011387001,0.8377388767319816);
\draw (0.31762014845762104,0.252361829659192)-- (0.562896489139237,0.252361829659192);
\draw (0.44025831879842897,0.4647773716367579)-- (0.562896489139237,0.252361829659192);
\draw (1.2257433115494745,-1.4315511509033485)-- (1.1622611232680182,-1.1946323988613);
\draw (0.9888245595074259,-1.368068962621892)-- (1.1622611232680182,-1.1946323988613);
\draw (0.9359996966841342,-1.0640003033158656)-- (0.8725175084026778,-1.3009190553579142);
\draw (0.9359996966841342,-1.0640003033158656)-- (0.6990809446420855,-1.127482491597322);
\draw (0.6319310373781077,-1.011175440492574)-- (0.8053676011387001,-0.8377388767319816);
\draw (0.5684488490966514,-0.901221065013438)-- (0.8053676011387001,-0.8377388767319816);
\draw (0.5684488490966513,-0.7742566884505253)-- (0.8053676011387001,-0.8377388767319816);
\draw (0.44025831879842897,-0.4647773716367579)-- (0.562896489139237,-0.252361829659192);
\draw (0.31762014845762104,-0.252361829659192)-- (0.562896489139237,-0.252361829659192);
\draw (0.28286079870404995,0.12263817034080794)-- (0.4952763406816159,0.);
\draw (0.28286079870404995,-0.12263817034080794)-- (0.4952763406816159,0.);
\draw [shift={(2.,0.)}] plot[domain=2.0408564767205206:4.242328830459066,variable=\t]({1.*2.4182795974314706*cos(\t r)+0.*2.4182795974314706*sin(\t r)},{0.*2.4182795974314706*cos(\t r)+1.*2.4182795974314706*sin(\t r)});
\draw [shift={(4.,0.)}] plot[domain=2.0408564767205206:4.242328830459066,variable=\t]({-1.*2.4182795974314706*cos(\t r)+0.*2.4182795974314706*sin(\t r)},{0.*2.4182795974314706*cos(\t r)+1.*2.4182795974314706*sin(\t r)});
\draw (4.774256688450526,1.4315511509033483)-- (4.837738876731983,1.1946323988612997);
\draw (4.90122106501344,1.4315511509033483)-- (4.837738876731983,1.1946323988612997);
\draw (5.0111754404925755,1.3680689626218918)-- (4.837738876731983,1.1946323988612997);
\draw (5.064000303315867,1.0640003033158654)-- (5.127482491597323,1.300919055357914);
\draw (5.064000303315867,1.0640003033158654)-- (5.237436867076459,1.2374368670764577);
\draw (5.064000303315867,1.0640003033158654)-- (5.300919055357916,1.1274824915973218);
\draw (5.3680689626218925,1.0111754404925737)-- (5.194632398861301,0.8377388767319813);
\draw (5.431551150903349,0.9012210650134377)-- (5.194632398861301,0.8377388767319813);
\draw (5.431551150903349,0.774256688450525)-- (5.194632398861301,0.8377388767319813);
\draw (5.194632398861301,0.8377388767319813)-- (4.707106781186548,0.7071067811865472);
\draw (4.707106781186548,0.7071067811865472)-- (5.064000303315867,1.0640003033158654);
\draw (4.837738876731983,1.1946323988612997)-- (4.707106781186548,0.7071067811865472);
\draw (4.707106781186548,0.7071067811865472)-- (4.,0.);
\draw (4.707106781186548,-0.7071067811865477)-- (4.,0.);
\draw (5.437103510860764,0.25236182965919174)-- (5.,0.);
\draw (5.75,0.)-- (0.2462962948338676,0.);
\draw (5.437103510860764,-0.2523618296591923)-- (5.,0.);
\draw (5.194632398861301,-0.8377388767319818)-- (4.707106781186548,-0.7071067811865477);
\draw (4.707106781186548,-0.7071067811865477)-- (5.064000303315867,-1.0640003033158658);
\draw (4.837738876731983,-1.1946323988613001)-- (4.707106781186548,-0.7071067811865477);
\draw (5.559741681201572,0.4647773716367576)-- (5.437103510860764,0.25236182965919174);
\draw (5.64951905283833,0.375)-- (5.437103510860764,0.25236182965919174);
\draw (5.68237985154238,0.2523618296591917)-- (5.437103510860764,0.25236182965919174);
\draw (5.717139201295951,0.12263817034080761)-- (5.504723659318385,0.);
\draw (5.717139201295951,-0.12263817034080828)-- (5.504723659318385,0.);
\draw (5.68237985154238,-0.25236182965919235)-- (5.437103510860764,-0.2523618296591923);
\draw (5.64951905283833,-0.375)-- (5.437103510860764,-0.2523618296591923);
\draw (5.64951905283833,-0.375)-- (5.437103510860764,-0.2523618296591923);
\draw (5.559741681201572,-0.46477737163675825)-- (5.437103510860764,-0.2523618296591923);
\draw (5.431551150903349,-0.7742566884505256)-- (5.194632398861301,-0.8377388767319818);
\draw (5.431551150903349,-0.9012210650134383)-- (5.194632398861301,-0.8377388767319818);
\draw (5.3680689626218925,-1.0111754404925741)-- (5.194632398861301,-0.8377388767319818);
\draw (5.064000303315867,-1.0640003033158658)-- (5.300919055357916,-1.1274824915973223);
\draw (5.064000303315867,-1.0640003033158658)-- (5.237436867076459,-1.2374368670764582);
\draw (5.064000303315867,-1.0640003033158658)-- (5.127482491597323,-1.3009190553579144);
\draw (5.0111754404925755,-1.3680689626218923)-- (4.837738876731983,-1.1946323988613001);
\draw (4.901221065013439,-1.4315511509033487)-- (4.837738876731983,-1.1946323988613001);
\draw (4.901221065013439,-1.4315511509033487)-- (4.837738876731983,-1.1946323988613001);
\draw (4.774256688450526,-1.4315511509033487)-- (4.837738876731983,-1.1946323988613001);
\draw (-0.6,1) node {$\Omega_e^+$};
\draw (2.788083959001203,0.5646948578081253) node[anchor=north west] {$e$};
\draw (3.9,-0.5) node {$t(e)$};
\draw [fill=black] (4,0) circle (1pt);
\draw (2.1,-0.5) node {$o(e)$};
\draw [fill=black] (2,0) circle (1pt);
\draw (6.7,1) node {$\Omega^-_e$};
\end{tikzpicture}
\caption{\label{fig:edge} An oriented edge $e$.}
\end{figure}
%%%%%%%%%%%%

By construction the map $e\mapsto \nu_e$ is equivariant under any automorphism of $X$ and if $\bar{e}$ denote the reverse orientation then $\nu_{\bar{e}}=-\nu_e$. Thus the Poisson transform satisfies $\Poisson{\phi}(\bar{e})=-\Poisson{\phi}(e)$. 
 Each geometric edge has therefore a preferred orientation $e$ for which $\Poisson{\B(x,y)}(e)$ is non-negative; Figure \ref{fig:preferred} illustrates this globally. In that case, the subset of $\dpl X$ where $B(x,y)$ takes its maximum, namely $d(x,y)$, has $\nu_e^+$-measure contributing more to the value of $\Poisson{\B(x,y)}(e)$ than its $\nu_e^-$-measure. Symmetrically, the set where $B(x,y)$ is minimal and equal to $-d(x,y)$ will have larger $\nu_e^-$-measure. For the latter signs cancel which explains in principle the positivity of the poisson transform for this preferred orientation. One notices the central role of the barycenter of $[x,y]$ in the symmetry.

%%%%% FIGURE %%%%%
\begin{figure}[htbp]
\centering
\begin{tikzpicture}[line cap=round,line join=round,>=triangle 45,x=1.0cm,y=1.0cm]
\foreach \x in {3}
\clip(-0.5,-0.58) rectangle (6.5,2.25);
\draw (0.5,0.)-- (1.,0.);
\draw (1.,0.)-- (2.,0.);
\draw (2.,0.)-- (4.,0.);
\draw (4.,0.)-- (5.,0.);
\draw (5.,0.)-- (5.5,0.);
\draw (0.11697777844051105,0.32139380484326974)-- (0.5,0.);
\draw (0.11697777844051105,-0.32139380484326974)-- (0.5,0.);
\draw (0.6579798566743313,0.9396926207859084)-- (1.,0.);
\draw (2.,1.5)-- (2.,0.);
\draw (0.0314727426090939,0.5563169600397468)-- (0.11697777844051105,0.32139380484326974);
\draw (-0.12922415981254093,0.3648058492600024)-- (0.11697777844051105,0.32139380484326974);
\draw (0.2749576351148424,1.2610864256291783)-- (0.6579798566743313,0.9396926207859084);
\draw (0.7448039455077965,1.4320964972920125)-- (0.6579798566743313,0.9396926207859084);
\draw (1.625,2.149519052838329)-- (2.,1.5);
\draw (2.375,2.149519052838329)-- (2.,1.5);
\draw (-0.12922415981254093,-0.3648058492600024)-- (0.11697777844051105,-0.32139380484326974);
\draw (0.0314727426090939,-0.5563169600397468)-- (0.11697777844051105,-0.32139380484326974);
\draw (4.,1.5)-- (4.,0.);
\draw (3.625,2.149519052838329)-- (4.,1.5);
\draw (4.375,2.149519052838329)-- (4.,1.5);
\draw (5.255196054492203,1.4320964972920123)-- (5.342020143325668,0.9396926207859081);
\draw (5.725042364885158,1.2610864256291778)-- (5.342020143325668,0.9396926207859081);
\draw (5.342020143325668,0.9396926207859081)-- (5.,0.);
\draw (5.883022221559489,0.3213938048432694)-- (5.5,0.);
\draw (5.883022221559489,-0.32139380484327007)-- (5.5,0.);
\draw (5.968527257390907,0.5563169600397465)-- (5.883022221559489,0.3213938048432694);
\draw (6.129224159812541,0.36480584926000204)-- (5.883022221559489,0.3213938048432694);
\draw (6.129224159812541,-0.3648058492600028)-- (5.883022221559489,-0.32139380484327007);
\draw (5.968527257390907,-0.5563169600397472)-- (5.883022221559489,-0.32139380484327007);
\draw [->] (0.6579798566743313,0.9396926207859084) -- (0.8289899283371657,0.4698463103929542);
\draw [->] (2.,1.5) -- (2.,0.75);

\draw [->] (0.11697777844051105,0.32139380484326974) --  (0.38293062669080097,0.10094781715856682); 
\draw [->] (0.11697777844051105,-0.32139380484326974) -- (0.3886698980749221,-0.08896166122344093) ; 
\draw [->] (0.2749576351148424,1.26108) -- (0.56,1.020);% done
\draw [->] (0.7448039455077965,1.43209) -- (0.675,1.039);%done
\draw [->] (1.625,2.149519052838329) -- (1.88,1.70); %done
\draw [->] (2.375,2.149519052838329) -- (2.12,1.70); %done
\draw [->] (4.,1.5) -- (3.73,1.97);%done
\draw [->] (4.,1.5) -- (4.27,1.97);%done
\draw [->] (4.,0.) -- (4.,0.975); %done
\draw [->] (5.,0.) -- (5.237,0.65);%done
\draw [->] (5.5,0.) -- (5.815,0.265);%done
\draw [->] (5.5,0.) -- (5.815,-0.265);%done
\draw [->] (5.342020143325668,0.9396926207859081) -- (5.267,1.37);
\draw [->] (5.342020143325668,0.9396926207859081) -- (5.68,1.222);
\draw [->] (0.5,0.) -- (0.85,0.);%done
\draw [->] (1.,0.) -- (1.5,0.);%done
\draw [->] (2.,0.) -- (3.1,0.);%done MEDIANE EDGE
\draw [->] (4.,0.) -- (4.68,0.);%done
\draw [->] (5.,0.) -- (5.39,0.);%done
\draw (0.5,0.3) node {$x$};
\draw [fill=black] (0.5,0) circle (1.5pt);
\draw (5.5,0.35) node {$y$};
\draw [fill=black] (5.5,0) circle (1.5pt);
\end{tikzpicture}
\caption{\label{fig:preferred} The preferred orientations.}
\end{figure}
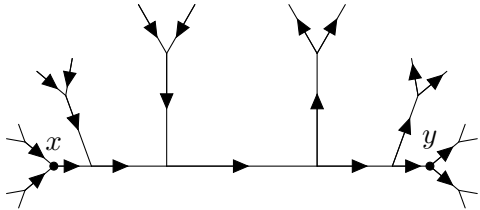
%%%%%

The next step derives an integration formula (Propositions \ref{prop:SumCase1} and \ref{prop:SumCase2}) for
\begin{equation*}
\Poisson{\B(x,y)}(e)=\int_{\dpl X} B(x,y)\ddroit\nu_e,
\end{equation*}
which exhibits a symmetry around the barycenter of $[x,y]$. More precisely every edge $e$ has an associated edge $e'$, which is aligned with $[x,y]$ if and only if $e$ is so, whose position relatively to $[x,y]$ is opposite to the barycenter, and for which the integration formula shows $\Poisson{\B(x,y)}(e)=\Poisson{\B(x,y)}(e')$, see Corollary \ref{cor:symmbarycenter}. Using this symmetry, one averages the aforementioned formulae for $\Poisson{\B(x,y)}(e)$ and $\Poisson{\B(x,y)}(e')$ to obtain a spherical rearrangement of the terms into non-negative quantities (Sections \ref{subsection:P} and \ref{subsection:Pj}) and in turn yields the relevant upper bounds for $\Poisson{\B(x,y)}(e)$, see equations \eqref{eq:parai} and \eqref{eq:paraij}. 

The upper bound of Theorem \ref{MainThm} is obtained by observing that the sum of $\Poisson{\B(x,y)}(e)^2$ over the edges not on $[x,y]$ is bounded independently of $x,y$ and that each edge on $[x,y]$ contributes a uniformly bounded quantity to the $\ell^2$-norm, see Section \ref{subsection:sum}.

The spherical rearrangement of Section \ref{subsection:P} also provides the lower bound by applying Cauchy-Schwarz to the indicator function of the edges of $[x,y]$ pointing towards $y$.

\subsection{Motivations}
Let $G$ be a group with a length function $L$, typically the word length function of a compact generating set of a locally compact group, and let $V$ be a Banach space endowed with a linear isometric action of $G$. In this setting, cohomology theories obtained by imposing a growth condition (with respect to $L$) on the norm of cocycles have been extensively studied in recent decades and have proven themselves powerful refinements of group cohomology, {\it e.g.\ }bounded cohomology \cite{Gromov}, \cite{MonodInvitation}, or polynomially bounded cohomology \cite{Ogle}. The polynomial theory has notable applications to the $\ell^1$-analogue of the Novikov conjecture \cite{OgleNovikov}.

Our result represents the first step of a program initiated in \cite{DumontThese} aiming at determining the norm growth of the natural cocycles introduced by Klingler~\cite{KlinglerVolume} for some (almost-simple) algebraic groups over a non-Archimedean local field of characteristic~$0$ and their \emph{Steinberg representation}. The latter, introduced by Matsumoto \cite{Matsumoto} and Shalika \cite{Shalika}, is often called the \emph{special representation} as in \cite{BorelSerre}, \cite{Borel}. 

For simplicity, we focus on the special linear group $G=\SL_n(\Q_p)$ over a $p$-adic field, $n\geq 2$. Among the (admissible) irreducible pre-unitary representation of~$G$, the Steinberg representation $\St$ is the only non-trivial coefficient module for which the cohomology of $G$ does not vanish identically in positive degree. This result was implicit in the work of Borel~\cite{BorelLaplacien} for large $p$ using ideas of Garland \cite{Garland}, but was first proved in full generality by Casselman~\cite{CasselmanGarland}. More precisely, the only non-vanishing positive degree is the rank $r=n-1$ of $G$ for which $\H^{r}(G,\St)=\C$. In \cite{KlinglerVolume}, Klingler geometrically builds an $r$-cocycle $\vol_G$ whose class spans the $r$-cohomology \cite{KlinglerVolume}*{Th\'eor\`eme 1}. Later Monod \cite{MonodInvitation}*{Problem P} suggested $\vol_G$ should be a good candidate to look for polynomial growth phenomenon. He is actually interested in `quasifying' $\vol_G$ to obtain a new polynomially bounded cohomology class of degree $r+1$ which remains an open question.

To study the norm of the cocycle $\vol_G$, one needs to understand the $G$-invariant inner product of the Steinberg representation. The Poisson transform \cite{KlinglerPoisson} relative to an Iwahori subgroup $B$ of $G$ yields an explicit isomorphism of (admissible) representations between $\St$ and a submodule $\mathcal{H}\subset\Ldroit^2(G/B)$ of (smooth) harmonic functions. Borel--Serre \cite{BorelSerre}*{\S 5.10} had earlier established this isomorphism using abstract arguments leaving no hope for explicit computations; the same holds for the argument in \cite{Borel}*{\S 6}.

Both $\vol_G$ and the Poisson transform have geometric description using the Bruhat-Tits building of $G$. Following Klingler \cite{KlinglerVolume} \cite{KlinglerPoisson}, the cocycle $\vol_G$ for $G=\SL_2(\Q_p)$ corresponds to the Busemann cocycle $\B$ of the Bruhat-Tits tree of $G$, whereas the Poisson transform relative to an Iwahori subgroup (edge stabilizer) is the map $\Poisson{\,}$ studied here. More precisely, fixing a base vertex $x$, the map $\vol_G(g,g')\coloneqq \B(gx,g'x)$ defines Klingler's homogeneous 1-cocycle for $\Aut(X)$ valued into the representation $\cC^{\infty}(\dpl X)/\C$ of locally constant functions modulo constants. 
The latter is pre-unitary because it is isomorphic, under the Poisson transform, to a submodule of $\ell^2(\OrEdges)$, and identifies to $\St$ when restricted to $G$.
This geometric description allows to somewhat ignore the role of $G$, and the definitions extend to arbitrary regular trees. More generally if $X$ is a regular locally finite Euclidean building, {\it e.g.\ }as studied in \cite{DumontThese} for type $\widetilde{A}_2$, similar constructions can and will be considered in future work.

\subsection{Acknowledgements} The author is indebted to many people and organizations which supported the present research, and wishes to thank them warmly. Mainly performed at the EPF Lausanne, the project was funded by the Swiss Confederation and the ERC Advance Grant 267635 attributed to Nicolas Monod. The author's stay at the University of Utah, which we thank for its generous hospitality, lead to the completion of the present work under the sponsorship of the Swiss NSF via an Early Postdoc.Mobility grant (Project 171852, \textit{Cohomology and Bruhat-Tits Buildings}). 

We thank also Maxime Gheysens for discussions, notably leading to the proof of the lower bound in Theorem \ref{MainThm}. %Corollary \ref{cor:4}.
Finally, we benefited greatly from the supervision of Nicolas Monod who suggested this research topic; we are profoundly grateful. %\textcolor{red}{Figures \ref{fig:graphf}--\ref{fig:graphh} are borrowed from the authors PhD Thesis \cite{DumontThese}.}

%%%%%% Preliminaries %%%%% 
\section{Preliminaries}
We start with some preliminaries regarding the Poisson transform and the Busemann cocycle. Our conventions for graphs and trees follow Serre's book \cite{Serre}. 

Let $q\geq 2$ be an integer and $X$ a $(q+1)$-regular tree. 
In addition $X$ comes with an extremity map $(o,t)\colon\OrEdges\to\Vertices\times\Vertices$, assigning to each edge $e$ its origin $o(e)$ and target $t(e)$, and an orientation reversing map $e\mapsto\bar{e}$. A \emph{geometric edge} is understood to be a set of the form $\{e,\bar{e}\}$.

We identify the tree $X$ with its geometric realization and endow it with the metric for which geometric edges have length $1$. We denote $S_R(x)$ the sphere about $x$ of radius $R\in[0,\infty)$. 
The visual boundary of $X$, denoted $\dpl X$, is the set of asymptotic classes of geodesic rays $r\colon[0,\infty)\to X$. Endowed with the cone topology, $\dpl X$ is compact metrizable and totally disconnected; the basis for the topology being given by the following family of closed-open subsets: 
\begin{equation}\label{eq:4}
\Omega_x(z)\coloneqq\{\xi\in\dpl X\mid z \text{ sit on the geodesic ray from }x \text{ to }\xi\},
\end{equation}
with $x,z\in X$. The set $\Omega_x(z)$ is called the \emph{shadow} of $z$ from $x$ (or from light emanating from $x$). The visual boundary $\dpl X$ is also homeomorphic to the projective limit of the system
\begin{equation*}
\{ S_{n+1}(x)\to S_{n}(x)\mid n\in\N\},
\end{equation*}
for any base point $x\in \Vertices$, where each sphere is given the discrete topology.

\subsection{Busemann Cocycle}\label{subsection:Busemann}
For every pair of vertices $(x,y)\in \Vertices^2$, the function $z\mapsto d(y,z)-d(x,z)$ can be extended to the visual boundary via \eqref{eq:Busemann}. 
The induced map $B(x,y)\colon\dpl X\to \R$ is continuous and called the \emph{Busemann cocycle}, a terminology justified by the 1-cocycle identity:
\begin{equation*}
B(x,z)=B(x,y)+B(y,z).
\end{equation*}
The Busemann cocycle is a locally constant function on $\dpl X$ as we now see by identifying its values and level sets. 
We consider two fixed vertices $x$ and $y$. Given $\xi\in\dpl X$, let $r$ be the unique geodesic ray from $x$ to $\xi$. By definition, the value of Busemann cocycle at $\xi\in \dpl X$ is given by:
\begin{equation*}
B(x,y)(\xi)=\lim_{t\to\infty} d(y,r(t))-d(x,r(t))=\lim_{t\to\infty} d(y,r(t))-t.
\end{equation*}
The argument of the limit is in fact constant as soon as $r(t)$ reaches the geodesic ray $r'$ joining $y$ to $\xi$. More precisely, 
\begin{equation*}
B(x,y)(\xi)=t'-t
\end{equation*}
for all $t,t'\geq 0$ which satisfy $r'(t')=r(t)$. Set $d\coloneqq d(x,y)$ and $k\coloneqq d(x,r')$, then
\begin{equation*}
B(x,y)(\xi)=d-2k.
\end{equation*}
Consider the geodesic segment $\sigma\colon[0,d]\to X$ from $x$ to $y$; the level set for the above value is given by:
\begin{equation*}
B(x,y)^{-1}(d-2k)=\{\xi \mid B(x,y)(\xi)=d-2k\}=\Omega_x(\sigma(k))\smallsetminus\Omega_x(\sigma(k+1)),
\end{equation*}
for integers $0\leq k< d$, and equals $\Omega_x(y)$ otherwise.

\subsection{Visual measure $\nu_x$ and Radon-Nikodym derivative}\label{subsection:VisualVertex}
The well-known probability measures $\nu_x$ on $\dpl X$ were, to the best of our knowledge, introduced by Cartier in \cite{Cartier} and \cite{CartierBourbaki}*{\S 8} as tree analogues of the visual measures on the visual boundary of the hyperbolic plane. We refer also to the book of Fig\`a-Talamanca and Nebbia \cite{FigaTalamancaNebbia}*{Chap~II} for visual measures.
Fix a base vertex~$x$, the Borel probability measure $\nu_x$, called \emph{visual measure at $x$}, can be defined as the projective limit of the uniform probability measures on the spheres $S_n(x)$, with $n\in\N$. In other words, it is the unique measure on $\dpl X$ satisfying 
\begin{equation*}
\nu_x(\Omega_x(z))={\card{S_n(x)}}^{-1}=\frac{1}{q^{n-1}(q+1)}, 
\end{equation*}
for all $z\in S_n(x)$ and all $n\in \N^*$. Different base points $x,y\in \Vertices$ yield absolutely continuous measures for which the Radon--Nikodym derivative is given by: 
\begin{equation*}
\frac{\ddroit\nu_x}{\ddroit\nu_y}(\xi)=q^{B(x,y)(\xi)},
\end{equation*}
for all $\xi\in\dpl X$. By construction, $x\mapsto\nu_x$ is equivariant under automorphisms of~$X$.

\subsection{Visual measure $\nu_e$ and Poisson transform}\label{subsection:VisualEdge} We now detail the construction of the signed measure $\nu_e$ on $\dpl X$ associated to an oriented edge $e\in\OrEdges$. We merely translate into a geometric language the \emph{Poisson transform relative to Iwahori subgroups} and its associated measures developed by Klingler in \cite{KlinglerPoisson}.

The field of measures $e\mapsto\nu_e$ constructed below is naturally $\Aut(X)$-equivariant and satisfies $\nu_{\bar{e}}=-\nu_e$. First, we consider $\nu_e^+$, the Borel probability measure supported on $\Omega_e^+\coloneqq\Omega_{t(e)}(o(e))$ (see~\eqref{eq:4} for the notation) obtained from $\nu_{o(e)}$ by restriction and scaling as follows: 
\begin{equation*}
\frac{q}{q+1}\cdot\nu_e^+=\nu_{o(e)}|_{\Omega_e^+}.
\end{equation*}
On the complement of $\Omega_e^+$, we define $\nu_e^-$ to be the Borel probability measure on $\Omega_e^-\coloneqq\Omega_{o(e)}(t(e))$ given by: 
\begin{equation*}
\frac{1}{q+1}\cdot\nu_e^-=\nu_{o(e)}|_{\Omega_e^-}.
\end{equation*}
Finally, we define the signed measure associated to $e$ to be $\nu_e\coloneqq \nu_e^+-\nu_e^-$. The \emph{Poisson transform} $\Poisson{\phi}\colon\OrEdges\to\C$ of a continuous function $\phi\colon\dpl X\to \C$ is defined by integration, see equation \eqref{eq:3}.

\begin{remark} If $\phi$ is constant then $\Poisson{\phi}=0$, hence the Poisson transform of a function depends only on its class modulo constant functions. 
\end{remark}

The Poisson transform ranges in the space of alternate \emph{harmonic} functions on $\OrEdges$. For if $x\in \Vertices$, we have
\begin{equation*}
\sum_{o(e)=x} \nu_e=0,\quad \text{ 
thus
}
\quad
\sum_{o(e)=x} \Poisson{\phi}(e)=0.
\end{equation*}
We reproduce and adapt the argument from \cite{DumontThese}*{Proposition 2.3.6} showing that locally constant functions have square summable Poisson transform.
\begin{proposition} \label{prop:6}Let $\phi$ be a locally constant function on $\dpl X$, then $\Poisson{\phi}$ is in $\ell^2(\OrEdges)$.
\end{proposition}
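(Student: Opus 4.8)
The plan is to exploit the fact that a locally constant function on the compact totally disconnected boundary $\dpl X$ is constant on each of a finite family of shadows, so that the real work reduces to controlling how fast $\Poisson{\phi}(e)$ decays as the edge $e$ recedes to infinity. First I would fix a base vertex $x$ and use compactness of $\dpl X$ together with the fact that the shadows $\Omega_x(z)$ for $z\in S_n(x)$ form a neighborhood basis: since $\phi$ is locally constant, there is an integer $N$ such that $\phi$ is constant on each shadow $\Omega_x(z)$ with $d(x,z)=N$. Thus $\phi$ is a finite linear combination of indicator functions of such shadows, and by linearity of the Poisson transform it suffices to prove the claim for $\phi=\indic_{\Omega_x(z)}$ with $d(x,z)=N$ fixed.

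Next I would compute $\Poisson{\indic_{\Omega_x(z)}}(e)$ explicitly from the definition of $\nu_e=\nu_e^+-\nu_e^-$. For an edge $e$, the integral $\int_{\dpl X}\indic_{\Omega_x(z)}\,\ddroit\nu_e$ is the difference of the $\nu_e^+$- and $\nu_e^-$-masses of $\Omega_x(z)$, and each of these is computed from $\nu_{o(e)}$ restricted to $\Omega_e^{\pm}$ and rescaled by $(q+1)/q$ or $(q+1)$. The key geometric observation is that $\Omega_x(z)$ is one of the closed-open sets in the basis, so its intersection with $\Omega_e^{\pm}=\Omega_{t(e)}(o(e))$ or $\Omega_{o(e)}(t(e))$ is again a shadow (or empty, or all of it) depending on the combinatorial position of $e$ relative to the geodesic ray $[x,z)$. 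I would organize the edges $e$ by this position: only edges $e$ whose geometric location interacts with the finite geodesic segment from $x$ to $z$, or lies beyond $z$ in the shadow, can give a nonzero contribution, and for edges deep inside $\Omega_x(z)$ the measures $\nu_e^+$ and $\nu_e^-$ both concentrate on $\Omega_x(z)$ so the contributions partially cancel.

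The heart of the estimate is then a counting argument weighted against the measure. Using the explicit formula $\nu_x(\Omega_x(z))=q^{1-n}(q+1)^{-1}$ for $z\in S_n(x)$, the value $\Poisson{\indic_{\Omega_x(z)}}(e)$ will be, up to the bounded scaling factors, a mass of the form $q^{-m}$ where $m$ grows linearly with the distance from $e$ to the relevant branch point. At distance roughly $m$ from that branch point there are on the order of $q^{m}$ edges, so the contribution to $\sum_e \Poisson{\phi}(e)^2$ from that shell is on the order of $q^{m}\cdot(q^{-m})^2=q^{-m}$, which is the term of a convergent geometric series. Summing over $m$ gives a finite bound, establishing square summability. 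The main obstacle I expect is bookkeeping: carefully partitioning $\OrEdges$ according to position relative to the geodesic $[x,z)$ and verifying that in each region the two measures $\nu_e^{\pm}$ evaluate on $\Omega_x(z)$ to give exactly a geometrically decaying mass, so that the exponential decay of $\Poisson{\indic_{\Omega_x(z)}}(e)^2$ strictly beats the exponential growth $q^{m}$ of the number of edges in each shell. Once that decay-versus-growth comparison is pinned down, convergence of the geometric series is immediate and the proposition follows by the triangle inequality over the finitely many shadows comprising $\phi$.
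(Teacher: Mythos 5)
Your proposal is correct, but it takes a genuinely different route from the paper. The paper never computes a single measure in its proof of this proposition: it takes the finite partition $\{\Omega_x(u)\mid u\in S_R(x)\}$ on which $\phi$ is constant, observes that $\phi$ --- and hence $\Poisson{\phi}$, by equivariance --- is invariant under the pointwise stabilizer $K$ of the ball $B_R(x)$, so that $\Poisson{\phi}$ is constant on each level of each branch $T_u$, and then uses the harmonicity relation $\sum_{o(e)=v}\Poisson{\phi}(e)=0$ (together with the antisymmetry $\Poisson{\phi}(\bar e)=-\Poisson{\phi}(e)$) to propagate a factor-$1/q$ decay per level down each branch; the geometric series then beats the $q^j$ edges per level exactly as in your final step. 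You instead reduce by linearity to $\phi=\indic_{\Omega_x(z)}$ and compute $\nu_e(\Omega_x(z))=\nu_e^+(\Omega_x(z))-\nu_e^-(\Omega_x(z))$ directly from the definitions of $\nu_e^{\pm}$ and the visual measures --- and your decay claims do check out: for $e$ outside the subtree at $z$ the shadow sits inside one of $\Omega_e^{\pm}$ and its $\nu_{o(e)}$-mass decays like $q^{-m}$ in the distance $m$ to the segment $[x,z]$, while for $e$ deep inside the subtree one must use $\nu_e(\dpl X)=0$ to convert ``both probability measures nearly concentrate on $\Omega_x(z)$'' into $\abs{\nu_e(\Omega_x(z))}=\abs{\nu_e(\dpl X\smallsetminus\Omega_x(z))}=O(q^{-m})$, which is precisely the cancellation you flagged. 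The trade-off: your argument is more elementary and self-contained (no group action, no harmonicity needed) and produces explicit per-shadow constants, at the cost of the case-by-case bookkeeping you anticipate; the paper's symmetry-plus-harmonicity argument is shorter, yields the clean uniform bound $\norm{\Poisson{\phi}}_{\ell^2(\OrEdges)}\leq C\max_{d(e,x)\leq R+1}\abs{\Poisson{\phi}(e)}$, and --- in line with the paper's stated motivation --- is the kind of argument one can hope to transport to higher-rank buildings where explicit measure computations become onerous. Amusingly, your method is close in spirit to what the paper does later anyway (Lemmas \ref{lem:nue1} and \ref{lem:nue2}), where exactly such explicit $\nu_e$-measures of boundary pieces are computed for the integration formulae.
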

\begin{proof} By compactness, $\phi$ takes only finitely many values and consequently there is a finite disjoint open cover $\{\Omega_x(u)\mid u\in S_R(x)\}$ into small shadows, $R$-distant from a base point $x$, on each piece of which $\phi$ is constant. Let $K$ be the pointwise stabilizer of the ball $B_R(x)$. Then $\phi$ is $K$-invariant and so is $\Poisson{\phi}$. Each tree $T_u$ rooted $u\in S_R(x)$ in the complement of $B_R(x)$ has visual boundary $\Omega_x(u)$ and is acted upon by $K$ with quotient a ray. Thus $\Poisson{\phi}$ is constant on each level of $T_u$ provided all edges point toward $u$. Using harmonicity, the sum of $\Poisson{\phi}(e)$ over $q$ adjacent edges of $T_u$ on level $j\geq1$ equals $q$ times the value of their common ancestor edge of level $j-1$, again provided the edges point toward $u$. Then a geometric series argument on each ray shows that
\begin{equation*} 
\norm{\Poisson{\phi}}_{\ell^2(\OrEdges)}\leq C\max_{d(e,x)\leq R+1}\abs{\Poisson{\phi(e)}},
\end{equation*}
where $C$ depends on $R$ and $q$ only.
\end{proof}

%%%%%%% The proof %%%%%%
\section{The proof}
This section aims at proving the following estimate of the $\ell^2(\OrEdges)$-norm of the Poisson transform of $B(x,y)$:
\begin{equation}\label{eq:ineq}
\sum_{e\in\OrEdges} \Poisson{B(x,y)}(e)^2\leq C d(x,y)+C',
\end{equation}
where $C,C'$ are the constants of Theorem \ref{MainThm} depending only on $q$. The lower bound of the theorem is proved at the end of Section \ref{subsection:P}

Here is a detailed outline of the proof.

Firstly, in Section \ref{subsection:parametrization}, the oriented edges are parametrized according to their position relatively to $x$ and~$y$. We distinguish edges aligned with the geodesic segment $[x,y]$ from those not, the projection of the latter onto $[x,y]$ is well defined and equals neither $x$ nor~$y$. The distances of their extremities to $x$ and $y$ are implicitly used as parameters.

Secondly, we obtain the integration formulae for $\Poisson{B(x,y)}(e)$ (Propositions \ref{prop:SumCase1} and \ref{prop:SumCase2}) depending only on the aforementioned parameters (and $q$). The strategy is to decompose the boundary $\dpl X$ into countably many disjoint open sets whose $\nu_e$-measure is easily computed, on each of which $B(x,y)$ is constant, and then apply countable additivity of the measure. In general, removing a finite subtree of $X$ yields a decomposition of $\dpl X$ into finitely many disjoint open subsets, the boundaries of the connected components of the resulting forest. For instance, the level sets of $B(x,y)$ are obtained by removing the segment $[x,y]$, see Section \ref{subsection:Busemann}. On the other hand, Section \ref{subsection:VisualEdge} defines $\nu_e$ by removing the geometric edge $\{e,\bar{e}\}$ and looking at the resulting partition $\dpl X=\Omega_e^+\sqcup\Omega_e^-$. Suppose there is a geodesic $\sigma$ containing $e$ and $[x,y]$, the aligned case, one could naively remove their convex hull and obtain a finite open partition of $\dpl X$. However the resulting finite sum representing $\Poisson{B(x,y)}(e)$ has no obvious closed form. Instead, picking $\sigma\colon\R\to X$ to be a geodesic line, with $\sigma(0)=x$ say, we obtain a countable partition of $\dpl X$ indexed over $\Z$ into open sets with explicit $\nu_e$-measure (Lemma \ref{lem:nue1}) and on which $B(x,y)$ is constant and given by the piecewise affine function $f$ defined in \eqref{eq:f}. The case where $e$ is not aligned with $[x,y]$ requires more work, but follows similar principles, see Section~\ref{subsection:IntegrationFormulae}.

Thirdly, from the integration formula emerges a symmetry described in the introductory Section \ref{Ontheproof}. Averaging the integration formulae of two symmetric edges provides a rearrangement into non-negative terms spherically distributed around a point. The averaging technique depends again on whether $e$ is aligned with $[x,y]$ or not (Section \ref{subsection:P} and Section \ref{subsection:Pj} respectively). 
This is achieved in the last part of the paper.

Finally, we gather in Section \ref{subsection:sum} the above ingredients to complete the proof of inequality~\eqref{eq:ineq}.

\subsection{Parametrization of the edges}\label{subsection:parametrization}

 We fix $x,y\in\Vertices$ and an edge $e\in\OrEdges$. Without loss of generality we may assume $d\coloneqq d(x,y)$ is at least $2$. The possible configurations of $x$, $y$, $e$ in $X$ fall into two cases depending implicitly only on the distances between $x$, $y$ and the extremities of $e$:
\begin{enumerate}[(A)]
\item \label{itm:A} $e$ is aligned with $[x,y]$,
\item \label{itm:B} $e$ is not aligned with $[x,y]$.
\end{enumerate}
In \eqref{itm:A}, we shall always assume that 
there is a geodesic line $\sigma\colon\R\to X$ such that $x=\sigma(0)$, $y=\sigma(d)$ and $(o(e),t(e))=(\sigma(i),\sigma(i+1))$ for some $i\in\Z$. This corresponds to the preferred orientation of $e$ discussed in Section \ref{Ontheproof}. 

On the other hand, we choose for \eqref{itm:B} to orientate $e$ toward $[x,y]$, hence we assume the existence of a geodesic segment $\tau\colon[0,j]\to X$ such that $(o(e),t(e))=(\tau(0),\tau(1))$ and for which $\tau(j)$ is the projection of $e$ onto $[x,y]$. This orientation does not necessarily give positiveness of the Poisson transform of $B(x,y)$, but it provides a uniform treatment of~\eqref{itm:B}.  In this case $\tau(j)\neq x,y$ and one may extend $\tau$ to a geodesic $\tau\colon(-\infty,j]\to X$. When $q\geq 3$, one can extend $\tau$ further into a geodesic line $\tau\colon\R\to X$ intersecting $[x,y]$ only at $\tau(j)$, forming a cross with $[x,y]$. However if $q=2$, it is not possible to do so; this case gets special attention in Remark \ref{rem:q2}. We therefore assume $q\geq 3$. Like in case \eqref{itm:A}, we fix a geodesic $\sigma\colon\R\to X$ with $x=\sigma(0)$, $y=\sigma(d)$, so that the projection of $e$ onto $[x,y]$ is $\sigma(i)=\tau(j)$, for some integer $1< i < d$.
\begin{definition} We say that an oriented edge $e$ has \emph{parameter} $i\in \Z$ if it is described as in case \eqref{itm:A} above with $(o(e),t(e))=(\sigma(i),\sigma(i+1))$. Otherwise, $e$ is said to have \emph{parameters} $(i,j)$, with $1<i<d$ and $j\geq 1$, as in case \eqref{itm:B}, and its projection onto $[x,y]$ corresponds to $\sigma(i)=\tau(j)$.
\end{definition}

\begin{lemma} \label{lem:6} The number of edges with parameter $i\in\Z$ is given by:
\begin{equation*}
n(i)=\begin{cases} q^{\abs{i}} & \text{ if }  i<0,\\
1 & \text{ if } 0\leq i < d, \\
q^{i-d} & \text{ if } d\leq i,
\end{cases}
\end{equation*}
whereas there are $n(i,j)=(q-1)q^{j-1}$ edges with parameters $(i,j)$, for all $1\leq i\leq d-1$ and $j\geq 1$.\qed
\end{lemma}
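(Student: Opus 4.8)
The plan is to translate the statement into a pure path-counting problem in the $(q+1)$-regular tree, using that any two vertices are joined by a \emph{unique} geodesic and that a nonbacktracking geodesic has exactly $q$ continuations at each vertex (every neighbor except the one just visited). Throughout I fix the geodesic line $\sigma$ with $\sigma(0)=x$, $\sigma(d)=y$, and I count geometric edges carrying their preferred orientation, so that each edge is recorded exactly once, under the parameter attached to that orientation.

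For the aligned case \eqref{itm:A} I would separate the edges lying on the segment $[x,y]$ from those lying on a proper extension of it. If $0\le i<d$, then $(\sigma(i),\sigma(i+1))$ is an edge of $[x,y]$ itself; since the geodesic from $x$ to $y$ is unique, this is the only edge with parameter $i$, whence $n(i)=1$. If $i<0$, the edge $(\sigma(i),\sigma(i+1))$ is the last edge of a nonbacktracking path $\sigma(0),\sigma(-1),\dots,\sigma(i)$ of length $\abs{i}$ issuing from $x$ away from $y$: the first step avoids the single neighbor $\sigma(1)$, leaving $q$ choices, and each of the remaining $\abs{i}-1$ steps avoids only the previously visited vertex, again $q$ choices, giving $n(i)=q^{\abs{i}}$. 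The edges with $i\ge d$ are counted identically, by a nonbacktracking extension past $y$ in place of $x$.

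For the transverse case \eqref{itm:B} I fix the projection $\sigma(i)$ with $1\le i\le d-1$ (an interior vertex of $[x,y]$, so $\sigma(i)\neq x,y$) and describe the edge as the initial edge $(\tau(0),\tau(1))$ of a geodesic $\tau\colon[0,j]\to X$ that meets $[x,y]$ only at its endpoint $\tau(j)=\sigma(i)$. Counting $\tau$ from $\sigma(i)$ outward, the first choice $\tau(j-1)$ must avoid \emph{both} segment neighbors $\sigma(i-1)$ and $\sigma(i+1)$ — precisely the condition that the projection be $\sigma(i)$ and not a vertex further along $[x,y]$ — leaving $q+1-2=q-1$ choices; each of the subsequent $j-1$ choices $\tau(j-2),\dots,\tau(0)$ avoids only the previously visited vertex, giving $q$ choices each. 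Hence $n(i,j)=(q-1)q^{j-1}$, independently of $i$.

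The main point to get right is the bookkeeping at the branch vertex in case \eqref{itm:B}: the replacement of $q$ by $q-1$ at the first transverse step is exactly what encodes the requirement that the projection of $e$ onto $[x,y]$ be neither $x$ nor $y$, and it must not be conflated with the ordinary $q$-fold branching further out. One should also verify that the parametrization is a genuine bijection onto the stated index sets — that every aligned edge receives a well-defined $i\in\Z$ and every transverse edge a well-defined pair $(i,j)$ — which again rests on uniqueness of geodesics in $X$; and one should note that the degenerate case $q=2$, where $\tau$ cannot be completed to a cross (Remark \ref{rem:q2}), does not affect the count $(q-1)q^{j-1}=2^{j-1}$, since that count uses only the outward continuations of $\tau$.
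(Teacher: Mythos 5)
Your counts for $i<0$, for $0\le i<d$, and for the transverse parameters $(i,j)$ are correct (including the $q-1$ bookkeeping at the branch vertex and the $q=2$ remark), and since the paper states Lemma \ref{lem:6} with no proof at all, your nonbacktracking-path count is exactly the intended argument. But the step ``the edges with $i\ge d$ are counted identically'' is a genuine gap --- and it is precisely the spot where an honest execution of your own method would have exposed an off-by-one in the printed statement rather than ratified it. The parametrization is \emph{not} symmetric between the two ends of $[x,y]$: behind $x$ the preferred orientation points toward the segment, so an edge with parameter $i<0$ is determined by its \emph{origin} $\sigma(i)$, a vertex at distance $\abs{i}$ from $x$, giving $q^{\abs{i}}$; beyond $y$ the preferred orientation points \emph{away} from the segment, so an edge with parameter $i\ge d$ is determined by its \emph{target} $\sigma(i+1)$, a vertex at distance $i-d+1$ from $y$, and the ``identical'' count therefore yields $q^{i-d+1}$, not $q^{i-d}$. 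Concretely, take $i=d$: the edges are $(y,z)$ with $z$ any neighbor of $y$ other than $\sigma(d-1)$, so $n(d)=q$, while both you and the printed lemma assert $n(d)=1$. The symmetry of Corollary \ref{cor:symmbarycenter} confirms this: $P(i)=P(d-i-1)$ pairs $i=-m$ with $i=d+m-1$, and the evident geometric symmetry of the configuration forces $n(d+m-1)=n(-m)=q^{m}$, i.e.\ $n(i)=q^{i-d+1}$ for $i\ge d$.

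You should be aware that this slip is consistent with, and compensated by, a matching shift elsewhere in the paper: the third case of \eqref{eq:parai} reads $\frac{2}{q-1}q^{-(i-d-1)}$, whereas \eqref{eq:normtfi} combined with \eqref{eq:Pinorm} actually gives the sharper $\abs{P(i)}\le\frac{2}{q-1}q^{-(i-d)}$ for $i\ge d$. With the corrected pair $n(i)=q^{i-d+1}$ and $\abs{P(i)}\le\frac{2}{q-1}q^{-(i-d)}$, the substitution $i'=i-d+1\ge 1$ makes the beyond-$y$ tail in Section \ref{subsection:sum} literally identical to the behind-$x$ tail, which is exactly the factor $2\sum_{i>0}q^{i}q^{-2(i-1)}$ appearing there; so the constants of Theorem \ref{MainThm} are unaffected, and it is the statement of Lemma \ref{lem:6} (third case) that carries a typo. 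The lesson for your write-up: when a case is claimed to follow ``identically,'' carry out the count --- here the far endpoint switches from origin to target, the path length switches from $\abs{i}$ to $i-d+1$, and a one-line check at $i=d$ settles the matter.
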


\subsection{Integration formulae}\label{subsection:IntegrationFormulae} In this section, we derive the integration formula for $\Poisson{B(x,y)}(e)$ which depends only on the parameters of $e$. With the parametrizations of the previous section, two countable partitions of $\dpl X$ are obtained by removing the geodesic lines $\sigma$ and $\tau$.

The first partition $\{\Omega^\sigma_k\mid k\in\Z\}$ is obtained by removing only $\sigma$ and is used for the aligned case \eqref{itm:A}. For every $k\in\Z$, let 
\begin{equation*}
\Omega^\sigma_k\coloneqq \bigsqcup_{\substack{z\in S_1(\sigma(k)) \\ z\neq\sigma(k\pm1)}} \Omega_{\sigma(0)}(z).
\end{equation*}
Equivalently, we remove from $X$ the open geometric edges of the geodesic line $\sigma$ and consider the visual boundary components of the resulting forest. Of course, the end points of $\sigma$ are $\nu_e$-null sets and can be ignored. In the right hand side, the base point $\sigma(0)=x$ can be replaced by any node of~$\sigma$. 

The $\nu_e$ measure of $\Omega_k^\sigma$ can be computed using the Sections \ref{subsection:VisualVertex} and \ref{subsection:VisualEdge}. Suppose $\Omega_k^\sigma\subset \Omega^+_e$, the intuition is that of one starts at the origin $o(e)=\sigma(i)$ of $e$ with a bag of sand of total mass one and then distributes this sand equally to the neighbouring nodes except $t(e)=\sigma(i+1)$, hence dividing the total sand in $q$ equal piles. Repeating this process along $\sigma$, one reaches $\sigma(k)$ with having divided by $q$ as many times as there are edges between $\sigma(i)$ and $\sigma(k)$, namely $d(\sigma(k),\sigma(i))=\abs{k-i}$. From here $1/q$ of the mass continues its travel along $\sigma$ and the remaining $\frac{q-1}{q}$ of the mass at $\sigma(k)$ will constitute the $\nu_e$-measure of $\Omega_k^\sigma$.

\begin{lemma}[Proposition 4.3.4, \cite{DumontThese}]\label{lem:nue1}
Suppose $e$ is aligned with $[x,y]$ and parametrized by $i\in\Z$, then the $\nu_e$-measure of $\Omega^\sigma_k$ is given by:
\begin{equation}\label{eq:20}
\nu_e(\Omega^\sigma_k)=\frac{(q-1)}{q}\begin{cases} q^{-\abs{k-i}} & \text{ if } k-i\leq 0,\\ -q^{-(k-i)+1} & \text{ if } k-i >0,\end{cases}
\end{equation}
for all $k\in\Z$. \qed
\end{lemma}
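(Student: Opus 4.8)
The plan is to reduce the computation to the single visual measure $\nu_{o(e)}=\nu_{\sigma(i)}$, exploiting the fact that the definition of $\nu_e=\nu_e^+-\nu_e^-$ in Section~\ref{subsection:VisualEdge} presents both $\nu_e^+$ and $\nu_e^-$ as rescalings of $\nu_{o(e)}$ restricted to $\Omega_e^+$ and $\Omega_e^-$ respectively. Accordingly the argument splits into three tasks: locating each $\Omega^\sigma_k$ inside the partition $\dpl X=\Omega_e^+\sqcup\Omega_e^-$, evaluating the number $\nu_{\sigma(i)}(\Omega^\sigma_k)$, and then inserting the correct scaling factor and sign.

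First I would settle the side. Recall $\Omega_e^+=\Omega_{t(e)}(o(e))=\Omega_{\sigma(i+1)}(\sigma(i))$ is the set of ends whose ray issued from $\sigma(i+1)$ meets $\sigma(i)$, that is, the ends lying in the component of $X$ beyond $e$ on the $o(e)$-side; dually $\Omega_e^-$ collects the ends on the $t(e)$-side. Since every end of $\Omega^\sigma_k$ leaves $\sigma$ precisely at the vertex $\sigma(k)$, inspecting the position of $\sigma(k)$ relative to the oriented edge $e=(\sigma(i),\sigma(i+1))$ shows that $\Omega^\sigma_k\subset\Omega_e^+$ exactly when $k\leq i$ (branch point on the $o(e)$-side, i.e.\ $k-i\leq0$) and $\Omega^\sigma_k\subset\Omega_e^-$ exactly when $k\geq i+1$ (i.e.\ $k-i>0$). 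This reproduces the dichotomy of \eqref{eq:20}.

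Next I would compute $\nu_{\sigma(i)}(\Omega^\sigma_k)$ by a telescoping argument. Viewed from the base point $\sigma(i)$, the shadow $\Omega_{\sigma(i)}(\sigma(k))$ decomposes as the disjoint union of $\Omega^\sigma_k$ and the lone shadow of the next vertex of $\sigma$ farther from $\sigma(i)$, namely $\sigma(k-1)$ when $k<i$ and $\sigma(k+1)$ when $k>i$; here I use that a shadow $\Omega_p(w)$, as a subset of $\dpl X$, is the half-tree beyond $w$ away from $p$ and is therefore independent of the base point on the near side, so the description of $\Omega^\sigma_k$ with base $\sigma(0)$ may be read off equally well from $\sigma(i)$. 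Writing $n=\abs{k-i}$ and invoking $\nu_{\sigma(i)}(\Omega_{\sigma(i)}(w))=\card{S_m(\sigma(i))}^{-1}=q^{-(m-1)}(q+1)^{-1}$ for $w\in S_m(\sigma(i))$ from Section~\ref{subsection:VisualVertex}, the difference of the two shadow measures yields, uniformly in $k$ (the case $k=i$ being handled directly, since $\Omega^\sigma_i$ is then the union of the $q-1$ unit shadows of the off-$\sigma$ neighbours of $\sigma(i)$, of total measure $\tfrac{q-1}{q+1}$),
\begin{equation*}
\nu_{\sigma(i)}(\Omega^\sigma_k)=\frac{1}{q^{n-1}(q+1)}-\frac{1}{q^{n}(q+1)}=\frac{q-1}{q^{n}(q+1)}.
\end{equation*}

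Finally I would insert the scaling. On $\Omega_e^+$ one has $\nu_e=\nu_e^+=\tfrac{q+1}{q}\,\nu_{\sigma(i)}$, and on $\Omega_e^-$ one has $\nu_e=-\nu_e^-=-(q+1)\,\nu_{\sigma(i)}$, by the defining relations $\tfrac{q}{q+1}\nu_e^+=\nu_{o(e)}|_{\Omega_e^+}$ and $\tfrac{1}{q+1}\nu_e^-=\nu_{o(e)}|_{\Omega_e^-}$. Multiplying the displayed measure by $\tfrac{q+1}{q}$ when $k-i\leq0$ and by $-(q+1)$ when $k-i>0$ produces exactly $\tfrac{q-1}{q}\,q^{-\abs{k-i}}$ and $-\tfrac{q-1}{q}\,q^{-(k-i)+1}$, which is \eqref{eq:20}. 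I expect the only genuinely delicate points to be the side determination and the base-point independence of the shadows used in the telescoping; once those are pinned down the remaining arithmetic is immediate.
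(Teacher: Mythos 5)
Your proposal is correct and follows essentially the route the paper indicates: it makes rigorous the computation the paper sketches via the sand-spreading heuristic and the reference to Sections \ref{subsection:VisualVertex} and \ref{subsection:VisualEdge}, namely locating each $\Omega^\sigma_k$ in $\Omega_e^+\sqcup\Omega_e^-$, computing $\nu_{\sigma(i)}(\Omega^\sigma_k)=\frac{q-1}{q^{\abs{k-i}}(q+1)}$ by differencing shadow measures, and rescaling by $\frac{q+1}{q}$ or $-(q+1)$ according to the side. All three steps check out, including the boundary case $k=i$ and the harmless role of the two ends of $\sigma$, so nothing further is needed.
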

To synthetize the right hand side, we define the following continuous real functions $g(x)=q^{-\abs{x}}$ (Figure \ref{fig:graphg}) and 
\begin{equation*}
g_{\frac{1}{2}}(x)=
\begin{cases}
g(x) & \text{ if } x\leq 0,\\
1-2x & \text{ if } 0\leq x\leq 1,\\
-g(x-1)& \text{ if } 1 \leq x,
\end{cases}
\end{equation*}
so that \eqref{eq:20} becomes $\nu_e(\Omega^\sigma_k)=\frac{(q-1)}{q} g_{\frac{1}{2}}(k-i)$, for all $k\in\Z$. 
The presence of the index~$\frac{1}{2}$ emphasizes the central symmetry of the graph of $g_{\frac{1}{2}}$ about $(\frac{1}{2},0)\in\R^2$ (Figure \ref{fig:graphg12}). 

\begin{proposition}[Integration formula for parameter $i$]\label{prop:SumCase1} Let $e$ be an edge parametrized by $i\in\Z$. Then the Poisson transform of $B(x,y)$ evaluated at $e$ is given by:
\begin{equation*}
\Poisson{B(x,y)}(e)=\frac{(q-1)}{q}\sum_{k\in\Z}f(k) g_{\frac{1}{2}}(k-i),
\end{equation*}
where $f$ is the continuous piecewise affine function (of Figure \ref{fig:graphf}) given by:
\begin{equation}\label{eq:f}
f(x)=
\begin{cases}
d&  \text{ if } x \leq 0,\\
d-2x &\text{ if } 0\leq x \leq d,\\
-d & \text{ if } d\leq x.
\end{cases}
\end{equation}
\end{proposition}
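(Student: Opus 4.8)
The plan is to compute $\Poisson{B(x,y)}(e)=\int_{\dpl X}B(x,y)\,\ddroit\nu_e$ by decomposing $\dpl X$ into the countable partition $\{\Omega^\sigma_k\mid k\in\Z\}$ obtained by removing the geodesic line $\sigma$, and then summing the contributions level by level. The crucial observation is that both functions appearing in the integrand are \emph{constant} on each piece $\Omega^\sigma_k$. Indeed, by the computation in Section \ref{subsection:Busemann}, the value of $B(x,y)$ on a boundary point $\xi$ depends only on where the geodesic ray to $\xi$ branches off the segment $[x,y]$; since every $\xi\in\Omega^\sigma_k$ has its ray branching off $\sigma$ precisely at $\sigma(k)$ (for the interior of the segment) the Busemann value is $d-2k$ there, which is exactly $f(k)$ as defined in \eqref{eq:f}. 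I would verify this by matching the level-set description $B(x,y)^{-1}(d-2k)=\Omega_x(\sigma(k))\smallsetminus\Omega_x(\sigma(k+1))$ against the definition of $\Omega^\sigma_k$, checking that the piecewise-affine $f$ correctly records the value $d$ for $k\le 0$ (rays leaving $\sigma$ before $x$), the value $d-2k$ for $0\le k\le d$, and the value $-d$ for $k\ge d$ (rays leaving after $y$).

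With the integrand pinned down as $B(x,y)|_{\Omega^\sigma_k}=f(k)$, the integral reduces by countable additivity of the signed measure $\nu_e$ to
\begin{equation*}
\Poisson{B(x,y)}(e)=\sum_{k\in\Z} f(k)\,\nu_e(\Omega^\sigma_k),
\end{equation*}
and substituting the measure computation of Lemma \ref{lem:nue1}, rewritten as $\nu_e(\Omega^\sigma_k)=\frac{(q-1)}{q}g_{\frac{1}{2}}(k-i)$, yields the claimed formula directly. So the proof is essentially an application of Lemma \ref{lem:nue1} together with the identification of $f$; no new estimate is needed.

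The one genuine point requiring care, and what I expect to be the main obstacle, is the justification of interchanging the integral with the infinite sum — that is, the legitimacy of writing $\int_{\dpl X}B(x,y)\,\ddroit\nu_e$ as $\sum_k f(k)\,\nu_e(\Omega^\sigma_k)$ when $\nu_e$ is a \emph{signed} measure and $f$ is bounded but the sum is infinite. Since $\{\Omega^\sigma_k\}$ is a partition of $\dpl X$ up to a $\nu_e$-null set (the two endpoints of $\sigma$), and $B(x,y)$ is bounded (it takes values in $[-d,d]$), absolute convergence must be checked against the total variation $|\nu_e|$. Here I would invoke that $f$ is uniformly bounded by $d$ while $\sum_k|\nu_e(\Omega^\sigma_k)|\le \frac{(q-1)}{q}\sum_k g(k-i)<\infty$ (a convergent geometric series in $|k-i|$), so the series converges absolutely and the interchange is valid by dominated convergence for the signed measure.

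Once this integrability check is in place the remaining manipulation is purely formal. A secondary bookkeeping point is to confirm that the sum genuinely runs over all of $\Z$ and that the boundary pieces near $k=0$ and $k=d$ (where $f$ transitions between its affine regimes) are correctly captured by evaluating $f$ at integer arguments; this is immediate from \eqref{eq:f} since $f(0)=d$ and $f(d)=-d$ agree with the two-sided expressions. I would therefore present the proof as: (i) identify $B(x,y)\equiv f(k)$ on $\Omega^\sigma_k$; (ii) note absolute convergence via the geometric bound on $|\nu_e(\Omega^\sigma_k)|$; (iii) apply countable additivity and substitute Lemma \ref{lem:nue1}.
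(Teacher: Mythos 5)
Your proposal is correct and follows essentially the same route as the paper: its proof likewise partitions $\dpl X$ into the sets $\Omega^\sigma_k$, identifies $B(x,y)\equiv f(k)$ on each piece via the level-set description of Section \ref{subsection:Busemann}, and concludes by countable additivity together with Lemma \ref{lem:nue1}. Your explicit absolute-convergence check against the total variation of the signed measure $\nu_e$ (a bounded integrand against a geometrically summable sequence $\abs{\nu_e(\Omega^\sigma_k)}$) is a point the paper passes over silently with ``we can apply countable additivity,'' so it is a sound extra justification rather than a different approach.
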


\begin{proof} By construction $B(x,y)$ is constant on $\Omega^\sigma_k$ where it takes value $f(k)$ thanks to Section \ref{subsection:Busemann}. We can apply countable additivity to integrate $B(x,y)$ against $\nu_e$, 
\begin{align*}
\Poisson{B(x,y)}(e)&=\int_{\dpl X} B(x,y)\ddroit\nu_e= \sum_{k\in\Z} \int_{\Omega^\sigma_k} B(x,y)\ddroit\nu_e
=\sum_{k\in\Z} f(k)\nu_e(\Omega^\sigma_k)\\&=\frac{(q-1)}{q}\sum_{k\in\Z} f(k)g_{\frac{1}{2}}(k-i).\qedhere
\end{align*}
\end{proof}

Consider now an edge $e$ with parameters $(i,j)$. The adequate partition is obtained similarly removing further the geodesic $\tau$ containing $e$ and intersecting with the sets $\Omega^\sigma_k$. More precisely, for every $l\in\Z$, let 
\begin{equation*}
\Omega^\tau_l\coloneqq \bigsqcup_{\substack{z\in S_1(\tau(l)) \\ z\neq\tau(l\pm1)}} \Omega_{\tau(0)}(z)
\end{equation*}
and $\Omega_{k,l}\coloneqq\Omega^\sigma_{k}\cap\Omega^\tau_l,$ for all $k\in\Z$. 

The tree $X$ consists of the cross formed by $\sigma$ and $\tau$ with rooted trees attached at the nodes of $\sigma$ or $\tau$. Note $\Omega_{k,l}$ is empty whenever $l\neq j$ and $k\neq i$. When $l=j$, the set $\Omega_{k,j}$ is the boundary of the tree attached to $\sigma(k)$ and for $k=i$ the set $\Omega_{i,l}$ is the boundary of the tree attached to $\tau(i)$. This is provided $(k,l)\neq(i,j)$; the branching at $\sigma(i)=\tau(j)$ has various configurations depending on the valency of $X$. For instance if $l>j$, the set $\Omega_{i,l}$ is non-empty if and only if $\tau$ can indeed be extended to form a cross with $\sigma$, that is if $q\geq 3$.
Finally the center node of the cross also have a tree attached with boundary $\Omega_{i,j}$ which is empty if and only if $q=3$. This is fortunately covered by the last formula of Lemma \ref{lem:nue2}.

Intuitively, the mass spreads from $e$ along $\tau$ following $g_{\frac{1}{2}}$ similarly to the previous case. At the node $\sigma(i)=\tau(j)$, the mass entering $\sigma$ is $2/q\cdot g_\frac{1}{2}(j)$ and spreads uniformly along $\sigma$ in both directions according to the function $h:\R\to\R$ of Figure  \ref{fig:graphh}. It is given by:
\begin{equation*}
h(x)=\begin{cases} g(x+1)& \text{if }x\leq -1,\\
1 &\text{if } -1\leq x\leq 1\text{ and } x\neq 0,\\
0& \text{if } x=0,\\
g(x-1) &\text{if }x\geq 1,
\end{cases}
\end{equation*}
with a discontinuity at $x=0$ introduced for later convenience.

\begin{lemma}[Proposition 4.4.6, \cite{DumontThese}]\label{lem:nue2}
The $\nu_e$-measure of $\Omega_{k,l}$ is given by:
\begin{align*}
\nu_e(\Omega_{k,l})&=0 && \text{for all }k\neq i\text{ and }  l\neq j ,\\
\nu_e(\Omega_{i,l})&= \frac{(q-1)}{q}\cdot g_{\frac{1}{2}}(l)\quad\big(=\nu_e(\Omega^\tau_l)\,\big), && \text{for all }l\neq j, \\
\nu_e(\Omega_{k,j})&=\frac{(q-1)}{q}\cdot g_{\frac{1}{2}}(j)\cdot \frac{1}{q}\cdot h(k-i), && \text{for all } k\neq i,\\
\nu_e(\Omega_{i,j})&=\frac{(q-3)}{q}\cdot g_\frac{1}{2}(j).
\end{align*}
\end{lemma}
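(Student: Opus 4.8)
The plan is to reduce every value $\nu_e(\Omega_{k,l})$ to a count of elementary shadows, exploiting that $\nu_e$ is explicit on a shadow $\Omega_{o(e)}(z)$. Indeed, combining the scalings defining $\nu_e^{\pm}$ with the value $\nu_{o(e)}(\Omega_{o(e)}(z))=\frac{1}{q^{n-1}(q+1)}$ of Section~\ref{subsection:VisualVertex}, one gets for $z\in S_n(o(e))$
\begin{equation*}
\nu_e(\Omega_{o(e)}(z))=\begin{cases} q^{-n} & \text{if } \Omega_{o(e)}(z)\subseteq\Omega_e^+,\\ -q^{-(n-1)} & \text{if } \Omega_{o(e)}(z)\subseteq\Omega_e^-.\end{cases}
\end{equation*}
Since each $\Omega^\sigma_k$ and $\Omega^\tau_l$, hence each $\Omega_{k,l}$, is by definition a finite disjoint union of such shadows, the whole lemma reduces to determining, for every admissible pair, the number of constituent shadows, their common distance $n$ to $o(e)=\tau(0)$, and whether they sit in $\Omega_e^+$ or in $\Omega_e^-$. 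Throughout I use that $\sigma$ and $\tau$ meet only at the node $c\coloneqq\sigma(i)=\tau(j)$, whose $q+1$ neighbours are $\sigma(i\pm1)$, $\tau(j\pm1)$ and $q-3$ further vertices; this is where the hypothesis $q\geq3$ enters.

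For the vanishing statement I would argue that when $k\neq i$ and $l\neq j$ the subtrees hanging off $\sigma(k)$ transverse to $\sigma$ and those hanging off $\tau(l)$ transverse to $\tau$ lie in branches of $X$ separated by $c$; their visual boundaries are therefore disjoint and $\Omega_{k,l}=\varnothing$. The same separation property gives, for $l\neq j$, that every point of $\Omega^\tau_l$ is reached from $c$ in a direction transverse to $\sigma$, so $\Omega^\tau_l\subseteq\Omega^\sigma_i$ and $\Omega_{i,l}=\Omega^\tau_l$. Its measure then results from the very computation of Lemma~\ref{lem:nue1}, carried out along the line $\tau$ on which $e$ sits at parameter $0$, yielding $\frac{(q-1)}{q}g_{\frac{1}{2}}(l)$; alternatively one counts directly the $q-1$ shadows forming $\Omega^\tau_l$, all at distance $\abs{l}+1$ from $o(e)$ and all on the same side of $e$.

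The crossing node carries the last two formulae, both lying in $\Omega_e^-$ since $c$ is reached from $o(e)$ through $t(e)=\tau(1)$. I would first record that the single shadow $\Omega_{o(e)}(c)$ has measure $g_{\frac{1}{2}}(j)=-q^{-(j-1)}$, as $c\in S_j(o(e))$, and that removing $c$ splits it into the $q$ shadows $\Omega_{o(e)}(z)$ with $z$ a neighbour of $c$ other than $\tau(j-1)$; these $z$ all lie in $S_{j+1}(o(e))$, so each such shadow has measure $\frac{1}{q}g_{\frac{1}{2}}(j)=-q^{-j}$. The $q-3$ lateral neighbours are exactly those contributing to $\Omega_{i,j}$, which proves $\nu_e(\Omega_{i,j})=\frac{(q-3)}{q}g_{\frac{1}{2}}(j)$ and correctly returns $0$ when $q=3$. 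For the third formula one has again $\Omega_{k,j}=\Omega^\sigma_k$ for $k\neq i$, now reached from $c$ through $\sigma(i\pm1)$; starting from the shadow of measure $\frac{1}{q}g_{\frac{1}{2}}(j)$ in the direction $\sigma(i+1)$ (resp.\ $\sigma(i-1)$) and propagating along $\sigma$ as in Lemma~\ref{lem:nue1}, the subtrees off $\sigma(k)$ collect a fraction $\frac{q-1}{q}\,q^{-(\abs{k-i}-1)}$ of that mass, i.e.\ $\frac{(q-1)}{q}g_{\frac{1}{2}}(j)\cdot\frac{1}{q}h(k-i)$. Equivalently, $\Omega_{k,j}$ is a union of $q-1$ shadows at distance $j+\abs{k-i}+1$ from $o(e)$ inside $\Omega_e^-$, and the resulting count $-(q-1)q^{-(j+\abs{k-i})}$ matches the claimed value once $g_{\frac{1}{2}}$ and $h$ are unfolded.

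I expect the main obstacle to be the geometric bookkeeping at the crossing node $c$: enumerating its neighbours correctly, verifying that the distances to $o(e)=\tau(0)$ come out as asserted in each branch, and tracking both the $\Omega_e^+$ versus $\Omega_e^-$ sign and the case split $k<i$ or $k>i$, so that the elementary shadow counts collapse precisely onto the piecewise definitions of $g_{\frac{1}{2}}$ and $h$. The set-theoretic identities $\Omega_{k,l}=\varnothing$, $\Omega_{i,l}=\Omega^\tau_l$ and $\Omega_{k,j}=\Omega^\sigma_k$ are the real content and must be deduced carefully from the fact that $c$ is the unique point of $\sigma\cap\tau$; once they are established, the measure computations are the routine counts above.
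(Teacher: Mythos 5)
Your proposal is correct and follows essentially the same route as the paper, which only states the heuristic mass-spreading picture and defers the proof to \cite{DumontThese}*{Proposition 4.4.6}: you make that heuristic rigorous by reducing each $\Omega_{k,l}$ to the set-theoretic identities $\Omega_{k,l}=\varnothing$, $\Omega_{i,l}=\Omega^\tau_l$, $\Omega_{k,j}=\Omega^\sigma_k$ (valid since $\sigma\cap\tau=\{c\}$ for $q\geq3$) and then counting elementary shadows, whose $\nu_e$-measures $q^{-n}$ and $-q^{-(n-1)}$ you compute correctly from the definitions of $\nu_e^{\pm}$ and $\nu_{o(e)}$. All four formulae check out, including the boundary cases (the $q-3$ lateral neighbours at $c$, the sign split between $\Omega_e^+$ and $\Omega_e^-$, and agreement with the piecewise definitions of $g_{\frac{1}{2}}$ and $h$).
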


\begin{proposition}[Integration formula for parameter $(i,j)$]\label{prop:SumCase2} Let $e$ be an edge with parameters $(i,j)$, then $\Poisson{B(x,y)}(e)$ is given by: 
$$
\Poisson{B(x,y)}(e)=-\frac{2}{q}f(i)g_\frac{1}{2}(j)+\frac{(q-1)}{q^2}g_\frac{1}{2}(j)\left(\sum_{k\in\Z} f(k)h(k-i)\right) .
$$
\end{proposition}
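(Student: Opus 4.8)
The plan is to mirror the proof of Proposition \ref{prop:SumCase1}, now using the finer partition $\{\Omega_{k,l}\mid k,l\in\Z\}$ of $\dpl X$ (the $\nu_e$-null endpoints of $\sigma$ and $\tau$ being discarded). Since $B(x,y)$ is constant on each piece, countable additivity gives
\begin{equation*}
\Poisson{B(x,y)}(e)=\int_{\dpl X}B(x,y)\,\ddroit\nu_e=\sum_{k,l\in\Z}\int_{\Omega_{k,l}}B(x,y)\,\ddroit\nu_e=\sum_{k,l\in\Z}\big(B(x,y)|_{\Omega_{k,l}}\big)\,\nu_e(\Omega_{k,l}),
\end{equation*}
into which I substitute Lemma \ref{lem:nue2}. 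Because $\nu_e(\Omega_{k,l})=0$ unless $k=i$ or $l=j$, only three families survive: the pieces $\Omega_{k,j}$ with $k\neq i$, the pieces $\Omega_{i,l}$ with $l\neq j$, and the central piece $\Omega_{i,j}$.

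The crucial step is to read off the constant value of $B(x,y)$ on each surviving piece from Section \ref{subsection:Busemann}, where $B(x,y)(\xi)=f(m)$ whenever the geodesic ray from $x$ to $\xi$ leaves $\sigma$ at the node $\sigma(m)$. For $\xi\in\Omega_{k,j}$ with $k\neq i$, this ray branches off $\sigma$ at $\sigma(k)$, so $B(x,y)\equiv f(k)$ there. For $\xi\in\Omega_{i,l}$ with $l\neq j$, the ray runs from $x$ to the center $\sigma(i)=\tau(j)$ and then out along $\tau$; it therefore departs $\sigma$ at $\sigma(i)$, whence $B(x,y)\equiv f(i)$. The same is true on the central tree $\Omega_{i,j}$, whose rays also leave $\sigma$ at $\sigma(i)$, so again $B(x,y)\equiv f(i)$.

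Feeding these values and the measures of Lemma \ref{lem:nue2} into the sum yields
\begin{equation*}
\Poisson{B(x,y)}(e)=\frac{(q-1)}{q^2}g_{\frac{1}{2}}(j)\sum_{k\neq i}f(k)h(k-i)+f(i)\,\frac{(q-1)}{q}\sum_{l\neq j}g_{\frac{1}{2}}(l)+f(i)\,\frac{(q-3)}{q}g_{\frac{1}{2}}(j).
\end{equation*}
Two simplifications finish the proof. First, since $h(0)=0$ by definition, the term $k=i$ is harmless and $\sum_{k\neq i}f(k)h(k-i)=\sum_{k\in\Z}f(k)h(k-i)$, producing the second term of the claimed formula. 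Second, the central symmetry of $g_{\frac{1}{2}}$ about $(\tfrac12,0)$, that is $g_{\frac{1}{2}}(1-l)=-g_{\frac{1}{2}}(l)$, pairs $l$ with $1-l$ without fixed integer point and gives the telescoping identity $\sum_{l\in\Z}g_{\frac{1}{2}}(l)=0$ (equivalently, the absolutely convergent geometric series $\sum_{l\leq 0}q^{-\abs{l}}=\frac{q}{q-1}$ and $-\sum_{l\geq 1}q^{-(l-1)}=-\frac{q}{q-1}$ cancel). Hence $\sum_{l\neq j}g_{\frac{1}{2}}(l)=-g_{\frac{1}{2}}(j)$, and the last two terms collapse to
\begin{equation*}
f(i)\,g_{\frac{1}{2}}(j)\,\frac{-(q-1)+(q-3)}{q}=-\frac{2}{q}f(i)g_{\frac{1}{2}}(j),
\end{equation*}
which is exactly the first term of the asserted formula.

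The main obstacle I expect is the geometric bookkeeping of the second step: correctly locating, for each of the three families, the node of $\sigma$ at which the $x$-geodesic detaches, and in particular recognizing that the pieces strung along the arm $\tau$ together with the central tree all inherit the single value $f(i)$, whereas only the pieces $\Omega_{k,j}$ running along $\sigma$ see the full range of $f$. Once this is settled, the remaining input is the convergent-series identity $\sum_{l}g_{\frac{1}{2}}(l)=0$, which is immediate from the symmetry already emphasized in the text.
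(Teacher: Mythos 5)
Your proof is correct and follows essentially the same route as the paper's: the same countable partition $\{\Omega_{k,l}\}$, the measures of Lemma \ref{lem:nue2}, the observation that $h(0)=0$ lets the sum run over all $k\in\Z$, and the identity $\sum_{l\neq j}g_{\frac{1}{2}}(l)=-g_{\frac{1}{2}}(j)$ to collapse the remaining terms into $-\frac{2}{q}f(i)g_{\frac{1}{2}}(j)$. Your explicit justifications --- locating where the $x$-geodesic detaches from $\sigma$ to get the constant values $f(k)$ and $f(i)$, and deriving $\sum_{l\in\Z}g_{\frac{1}{2}}(l)=0$ from the central symmetry about $(\frac{1}{2},0)$ --- merely spell out steps the paper leaves implicit (and tacitly fix the paper's typo ``$\sum_{l\neq j}g_{\frac{1}{2}}(l)=-g_{\frac{1}{2}}(l)$'', where the right-hand side should read $-g_{\frac{1}{2}}(j)$).
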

\begin{proof} We apply countable additivity to the partition $\{\Omega_{k,l}\mid k,l\in\Z\}$.
\begin{align*}
\Poisson{B(x,y)}(e)&=\int_{\dpl X} B(x,y)\ddroit\nu_e= \sum_{k,l\in\Z} \int_{\Omega_{l,k}} B(x,y)\ddroit\nu_e
=\sum_{k,l\in\Z} f(k)\nu_e(\Omega_{l,k})\\
&= f(i)\nu_e(\Omega_{i,j})+\sum_{k\neq i}f(k)\nu_e(\Omega_{k,j})+f(i)\sum_{l\neq j}\nu_e(\Omega_{i,l})\\
&=\frac{(q-3)}{q}f(i)g_{\frac{1}{2}}(j)+\frac{(q-1)}{q^2}g_{\frac{1}{2}}(j)\sum_{k\in\Z}f(k)h(k-i)+\frac{(q-1)}{q}f(i)\sum_{l\neq j}g_{\frac{1}{2}}(l).
\end{align*}
One concludes using $\sum_{l\neq j}g_{\frac{1}{2}}(l)=-g_\frac{1}{2}(l)$.
\end{proof}

%%% REMARK Q=2 %%%
\begin{remark}\label{rem:q2} When $q=2$, the set $\Omega_{j,i}$ is empty and the ray $\tau\colon(-\infty,j]\to X$ cannot be extended further to form a cross with $\sigma$. In that case the above becomes
\begin{align*}
\Poisson{B(x,y)}(e)&=\sum_{k\neq i}f(k)\nu_e(\Omega_{k,j})+f(i)\sum_{l<j}\nu_e(\Omega_{i,l})=\sum_{k\neq i}f(k)\nu_e(\Omega_{k,j})-f(i)\sum_{l\geq j}\nu_e(\Omega_{l,i})
\end{align*}
This integration formula equals that of Proposition \ref{prop:SumCase2} provided $\sum_{l\geq j}\nu_e(\Omega_{i,l})=g_\frac{1}{2}(j)$, which holds thanks to $\sum_{l\geq j}g_\frac{1}{2}(l)=q\cdot g_\frac{1}{2}(j)$ (and $q=2$).
\end{remark}

Let $P(i)$ denote the value of the right hand side in Proposition \ref{prop:SumCase1} and $P(i,j)$ that of Proposition \ref{prop:SumCase2}. We use the notation $\bra\cdot,\cdot \ket$ for the standard pairing of $\ell^p(\Z)$ and its dual $\ell^q(\Z)$ and $\uptau$ the regular representation of $\Z$ on $\ell^p(\Z)$. The integration formulae for $P(i)$ and $P(i,j)$ can be written as:
\begin{equation}\label{eq:IntegrationFormulae}
P(i)=\frac{(q-1)}{q}\bra f,\uptau_i g_\frac{1}{2}\ket\quad\text{and}\quad P(i,j)=-\frac{2}{q}f(i)g_\frac{1}{2}(j)+\frac{(q-1)}{q^2}g_\frac{1}{2}(j)\bra f,\uptau_ih\ket.
\end{equation}

\subsection{Summation}\label{subsection:sum}
 In the last section, we obtain the following bounds. On the one hand, for an edge $e$ parametrized by~$i\in\Z$, 
\begin{equation}\label{eq:parai}
\abs{\Poisson{B(x,y)}(e)}=\abs{P(i)}\leq
\begin{cases} 
\frac{2}{(q-1)}q^{-(\abs{i}-1)} & \text{ if }  i<0,\\
\frac{2(q+1)}{(q-1)} & \text{ if } 0\leq i < d, \\
\frac{2}{(q-1)}q^{-(i-d-1)} & \text{ if } d\leq i.
\end{cases}
\end{equation}
On the other hand, if $e$ is an edge parametrized by~$(i,j)$, with~$j\geq1$, then
\begin{equation}\label{eq:paraij}
\abs{\Poisson{B(x,y)}(e)}=\abs{P(i,j)}\leq
\begin{cases} 
\frac{2}{(q-1)}q^{-(i+j-1)} & \text{ if }  1\leq i\leq d/2,\\
\frac{2}{(q-1)}q^{-(d-i+j-1)} & \text{ if } d/2\leq i\leq d-1.
\end{cases}
\end{equation}
We now use them to prove \eqref{eq:ineq} hence the upper bound of Theorem \ref{MainThm}.
\begin{proof}[Proof of the upper bound of Theorem \ref{MainThm}. Case \eqref{itm:A}] The number $n(i)$ of edges aligned with $[x,y]$ with parameter $i$ is obtained in Lemma \ref{lem:6}. It only accounts for the preferred orientations, hence the factor $1/2$ below.
\begin{align*} 
\frac{1}{2}\sum_{\substack{e\text{ aligned} \\ \text{with } [x,y]}}\Poisson{B(x,y)}(e)^2&=\sum_{i\in\Z} n(i) P(i)^2\\
&\leq  \left(\frac{2(q+1)}{(q-1)}\right)^2 \cdot d(x,y) + 2\left(\frac{2}{(q-1)}\right)^2\sum_{i> 0}q^iq^{-2(i-1)}\\
&= \frac{4(q+1)^2}{(q-1)^2} \cdot d(x,y) + \frac{8q^2}{(q-1)^3}.\qedhere
\end{align*} \noqed
\end{proof}
\begin{proof}[Case \eqref{itm:B}] The graph of the right hand side of \eqref{eq:paraij} is symmetric with respect to the axis $y=d/2$, meaning it is invariant under $i\mapsto d-i$. In fact we show in Section \ref{subsection:Pj} that $i\mapsto P(i,j)$ satisfies the same of symmetry. This allows us to only sum over $1\leq i\leq d/2$. Similarly to the previous proof:
\begin{align*}
\frac{1}{2}\sum_{\substack{e\text{ not aligned} \\ \text{with } [x,y]}}\Poisson{B(x,y)}(e)^2&=\sum_{\substack{1\leq i\leq d-1\\j\geq1}}n(i,j)P(i,j)^2 \leq 2\sum_{\substack{1\leq i\leq d/2\\j\geq1}}n(i,j)P(i,j)^2\\
&= 2\sum_{\substack{1\leq i\leq d/2\\j\geq1}}(q-1)q^{j-1}P(i,j)^2\\
&\leq 2(q-1) \sum_{\substack{1\leq i\leq d/2\\j\geq1}}q^{j-1}\left(\frac{2}{(q-1)}q^{-(i+j-1)}\right)^2\\
&= \frac{8}{(q-1)}\sum_{\substack{1\leq i\leq d/2\\j\geq0}}q^{-2i}q^{-j}\\
&\leq \frac{8}{(q-1)} \sum_{j\geq0}q^{-j}\sum_{i\geq0}q^{-2i}
=\frac{8q^3}{(q-1)^3(q+1)}.\qedhere
\end{align*}
\end{proof}

%%%% Symmetry and spherical rearrangement %%%%
\section{Symmetry and spherical rearrangement}
This section presents the averaging methods which, applied to the integration formulae \eqref{eq:IntegrationFormulae}, yield the estimates \eqref{eq:parai} and \eqref{eq:paraij}. The series $\bra f, \uptau_i g_\frac{1}{2}\ket$ and $\bra f, \uptau_ih\ket$ in \eqref{eq:IntegrationFormulae} are manipulated to obtain a spherical rearrangement of their terms.

Recall that $\sigma$ is a geodesic with $x=\sigma(0)$ and $y=\sigma(d)$, so that the barycenter of $[x,y]$ corresponds to $\sigma(d/2)$. As mentioned in the introduction, the Poisson transform of $B(x,y)$ is symmetric about $\sigma(d/2)$. For edges $e$ of type $(i,j)$, the map $i\mapsto P(i,j)$ indeed shows a symmetry (with a sign) around $d/2$, notably because $\sigma(i)$ is the projection of $e$ onto $[x,y]$. On the other hand, the parameter $i$ of an edge $e$ aligned with $[x,y]$ corresponds to its origin $o(e)=\sigma(i)$. Therefore the symmetry around the barycenter translates into $i\mapsto P(i)$ being symmetric about $(d-1)/2$, see Proposition \ref{prop:symmetries}.

To concretize the above discussion, we introduce some notations. For a real valued function $f\colon\R\to\R$ (or $f\colon\Z\to\R$), we write $\uptau_tf(x)=f(x-t)$, $\check{f}(x)=(f)\check{\,}(x)=f(-x)$ and $(\check{\uptau}_tf)(x)=f(x+t)=\uptau_{-t}f(x)$. The operators $\uptau_t$ and $\check{\,}$ correspond to the action of $t$ and $-1$ for the canonical linear action of $\Isom(\R)=\R\rtimes \{\pm1\}$ (resp.~$\Z\rtimes \{\pm 1\}$) onto the space of functions over $\R$ (resp. over~$\Z$). 
Denote further $\bra f_1,f_2\ket=\sum_{k\in\Z}f_1(k)f_2(k)$ when the series is well defined, {\it e.g.\ }absolutely convergent.
We shall use the following identities freely:
\begin{equation*}
\bra \uptau_t f_1,\uptau_t f_2\ket= \bra f_1, f_2\ket=\bra \check{f_1},\check{f_2}\ket,\quad
\check{\check{f}}=f,\quad \text{and}\quad
(\uptau_tf)\check{\,}=\uptau_{-t}\check{f}=\check{\uptau}_t\check{f}.
\end{equation*}

\begin{definition}\label{def:symmetry} A function $f\colon\R\to\R$ (or $f\colon\Z\to\R$) is said to have a \emph{(central) symmetry about $h\in\R$} (resp.\ $h\in\frac{1}{2}\Z$) if its graph is invariant under the central symmetry about $(h,0)\in\R^2$. Equivalently $f$ satisfies $-\check{f}=\uptau_{-2h}f$. 

We say that $f$ has an  \emph{(axial) symmetry about $y=h$} if its graph is invariant under the reflexion through the vertical line $y=h$. Equivalently $f$ satisfies $\check{f}=\uptau_{-2h}f$. 
\end{definition}
The following is clear from the graphs of Figure \ref{fig:graphf}--\ref{fig:graphh}.
\begin{lemma}\label{lem:symmetries} The functions $f$, $g$, $g_\frac{1}{2}$, and $h$ defined in Section \ref{subsection:IntegrationFormulae} satisfy
\begin{align*}
-\check{f}&=\uptau_{-d}f &&\text{(symmetry about }d/2),\\
\check{g}&=g \quad\text{and} \quad\check{h}=h &&\text{(symmetry about }y=0),\\
-\check{g}_\frac{1}{2}&=\uptau_{-1}g_\frac{1}{2} &&\text{(symmetry about }1/2).
\end{align*}
\end{lemma}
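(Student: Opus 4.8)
The plan is to verify each of the four stated symmetries directly from the piecewise definitions of $f$, $g$, $g_\frac{1}{2}$, and $h$ given in Section~\ref{subsection:IntegrationFormulae}, translating the geometric statement ``the graph is invariant under the central symmetry about $(h,0)$'' (resp.\ reflection through $y=h$) into the algebraic identities recorded in Definition~\ref{def:symmetry}. Concretely, for a central symmetry about $h$ I must check $-\check{f}=\uptau_{-2h}f$, i.e.\ $-f(-x)=f(x+2h)$ for all $x$; for an axial symmetry about $y=0$ I must check $\check{f}=f$, i.e.\ $f(-x)=f(x)$. Each function is defined by explicit cases on three or four intervals, so the task reduces to a finite case analysis matching intervals under the relevant reflection $x\mapsto -x$ (possibly composed with a translation).

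First I would treat the two evenness statements, which are the easiest: $g(x)=q^{-|x|}$ is manifestly even since $|{-x}|=|x|$, giving $\check{g}=g$; and $h$ is even because its defining cases are symmetric under $x\mapsto -x$ (the conditions $x\le -1$ and $x\ge 1$ swap while $g(x+1)$ and $g(x-1)$ swap accordingly, the middle strip $-1\le x\le 1$, $x\ne 0$ is $x\mapsto -x$ invariant, and the value $0$ at $x=0$ is fixed). Next, for the symmetry of $f$ about $d/2$, I would confirm $-f(-x)=f(x+2\cdot\frac{d}{2})=f(x+d)$: checking $x\le 0$ against $-d-x$, the middle affine piece $d-2x$ against itself shifted, and $x\ge 0$ against $x+d\ge d$, one sees the three constant/affine pieces map onto one another with the correct sign. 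Finally, for $g_\frac{1}{2}$ I would verify $-g_\frac{1}{2}(-x)=g_\frac{1}{2}(x+1)$ by matching the branch $x\le 0$ (where $g_\frac{1}{2}=g$) against the branch $x+1\ge 1$ (where $g_\frac{1}{2}(x+1)=-g((x+1)-1)=-g(x)$), the central affine piece $1-2x$ on $[0,1]$ against itself under $x\mapsto 1-x$, and the remaining branch symmetrically; the index $\tfrac12$ was chosen precisely so that the center of symmetry lands at $(\tfrac12,0)$.

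There is no serious obstacle here—the lemma is, as the paper notes, ``clear from the graphs''—so the only care needed is bookkeeping at the interval endpoints and at the deliberate discontinuity of $h$ at $x=0$. In particular, for $h$ one must note that the identity $\check h=h$ holds pointwise including at $x=0$, where both sides equal $0$, so the discontinuity (introduced ``for later convenience'') does not spoil the evenness. Likewise, since $f$, $g$, and $g_\frac{1}{2}$ are continuous, the overlapping endpoint conventions in their case definitions are consistent, and the reflected pieces agree on shared boundary points, so the case analysis is free of ambiguity. I would present the verification compactly as four short computations, one per identity, rather than grinding through every interval explicitly.
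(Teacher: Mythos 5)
Your proof is correct and is essentially the same argument as the paper's: the paper simply declares the lemma ``clear from the graphs'' of Figures \ref{fig:graphf}--\ref{fig:graphh}, and your interval-by-interval verification of $-f(-x)=f(x+d)$, $g(-x)=g(x)$, $h(-x)=h(x)$, and $-g_\frac{1}{2}(-x)=g_\frac{1}{2}(x+1)$ is exactly the routine computation that assertion leaves implicit. Your bookkeeping at the endpoints and at the deliberate discontinuity $h(0)=0$ is sound, so nothing is missing.
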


%%%%% FIGURES %%%%%
\begin{figure}[htbp]
\captionsetup{margin=10pt,font=small}
\centering

\begin{minipage}[b]{0.4\textwidth}

\begin{tikzpicture}[line cap=round,line join=round,>=triangle 45,x=.550cm,y=.550cm]
\draw[->,color=black] (-4,0.) -- (6.2,0.);
\foreach \x in {2.,4.}
\draw[shift={(\x,0)},color=black] (0pt,2pt) -- (0pt,-2pt) node[below] {};
\draw[->,color=black] (0.,-2.16) -- (0.,3.8);
\foreach \y in {-2.,0.,2.}
\draw[shift={(0,\y)},color=black] (2pt,0pt) -- (-2pt,0pt) node[left] {};
\draw[color=black] (5pt,-6pt) node {\footnotesize $0$};
\draw[color=black] (4,0.2) node[above] {$d$};
\draw[color=black] (2.1,0.2) node[above] {$\frac{d}{2}$};
\draw[color=black] (.2,2) node[right] {$d$};
\draw[color=black] (.2,-2) node[right] {$-d$};
\clip(-4,-3) rectangle (6.5,4.5);
\draw[line width=1.2pt,color=black,smooth,samples=100,domain=0:4] plot(\x,{-\x+2});
\draw[line width=1.2pt,color=black,smooth,samples=100,domain=-4:0] plot(\x,{2});
\draw[line width=1.2pt,color=black,smooth,samples=100,domain=4:6] plot(\x,{-2});
\draw[color=black] (-3.0,1.5) node {$f$};
\begin{scriptsize}
\end{scriptsize}
\end{tikzpicture}
\caption{\label{fig:graphf} Graph of $f$.}
  \end{minipage}
     \hfill
  \begin{minipage}[b]{0.4\textwidth}
\centering 
%%% Figure g
\begin{tikzpicture}[line cap=round,line join=round,>=triangle 45,x=.55cm,y=.55cm]
\draw[->,color=black] (-4.3,0.) -- (4.8,0.);
\foreach \x in {-4.,-3.,-2.,-1.,1.,2.,3.,4.}
\draw[shift={(\x,0)},color=black] (0pt,2pt) -- (0pt,-2pt) node[below] {};
\draw[->,color=black] (0.,-2.16) -- (0.,3.8);
\foreach \y in {-2.,0.,2.}
\draw[shift={(0,\y)},color=black] (2pt,0pt) -- (-2pt,0pt) node[left] {};
\draw[color=black] (5pt,-6pt) node {\footnotesize $0$};
\clip(-4.3,-2) rectangle (4.3,5.3);
\draw[line width=1.2pt,color=black,smooth,samples=100,domain=0:4.3] plot(\x,{2*exp(-\x)});
\draw[line width=1.2pt,color=black,smooth,samples=100,domain=-4.3:0] plot(\x,{2*exp(\x)});
\draw[color=black] (-4.0,0.5) node {$g$};
\begin{scriptsize}
\draw[color=black] (.0,2) node[right] {$1$};
\draw[color=black] (1.0,.1) node[above] {$1$};
\end{scriptsize}
\end{tikzpicture}
    \caption{\label{fig:graphg} Graph of $g$.}
\end{minipage}

\captionsetup{margin=10pt,font=small}
%\centering
  \begin{minipage}[b]{0.4\textwidth}
   %%% Figure g_1/2
\begin{tikzpicture}[line cap=round,line join=round,>=triangle 45,x=.55cm,y=.55cm]
\draw[->,color=black] (-4.3,0.) -- (4.8,0.);
\foreach \x in {-4.,-3.,-2.,-1.,1.,2.,3.,4.}
\draw[shift={(\x,0)},color=black] (0pt,2pt) -- (0pt,-2pt) node[below] {};
\draw[->,color=black] (0.,-2.16) -- (0.,3.8);
\foreach \y in {-2.,0.,2.}
\draw[shift={(0,\y)},color=black] (2pt,0pt) -- (-2pt,0pt) node[left] {};
\draw[color=black] (5pt,-6pt) node {\footnotesize $0$};
\clip(-4.3,-2) rectangle (4.3,5.3);
\draw[line width=1.2pt,color=black,smooth,samples=100,domain=0:1] plot(\x,{2-4*\x});
\draw[line width=1.2pt,color=black,smooth,samples=100,domain=1:4.3] plot(\x,{-2*exp(-\x+1)});
\draw[line width=1.2pt,color=black,smooth,samples=100,domain=-4.3:0] plot(\x,{2*exp(\x)});
\draw[color=black] (-3.8,0.5) node {$g_{\frac{1}{2}}$};
\begin{scriptsize}
\draw[color=black] (.0,2) node[right] {$1$};
\draw[color=black] (1.0,.1) node[above] {$1$};
\end{scriptsize}
\end{tikzpicture}
\caption{\label{fig:graphg12} Graph of $g_\frac{1}{2}$.}
  \end{minipage}
     \hfill
  \begin{minipage}[b]{0.4\textwidth}
 
%%%Figure h
\begin{tikzpicture}[line cap=round,line join=round,>=triangle 45,x=.550cm,y=.550cm]
\draw[->,color=black] (-4.3,0.) -- (4.8,0.);
\foreach \x in {-4.,-3.,-2.,-1.,1.,2.,3.,4.}
\draw[shift={(\x,0)},color=black] (0pt,2pt) -- (0pt,-2pt) node[below] {};
\draw[->,color=black] (0.,-1) -- (0.,3.8);
\draw[color=black] (5pt,-6pt) node {\footnotesize $0$};
\clip(-4.3,-1) rectangle (4.3,4);
\draw[line width=1.2pt,color=black,smooth,samples=100,domain=-1:-0.1] plot(\x,{2});
\draw[line width=1.2pt,color=black,smooth,samples=100,domain=0.1:1] plot(\x,{2});
\draw [fill=black] (0,0) circle (1.5pt);
\draw (0,2) circle (1.5pt);
\draw[line width=1.2pt,color=black,smooth,samples=100,domain=1:4.3] plot(\x,{2*exp(-\x+1)});
\draw[line width=1.2pt,color=black,smooth,samples=100,domain=-4.3:-1] plot(\x,{2*exp(\x+1)});
\draw[color=black] (-4.0,0.5) node {$h$};
\begin{scriptsize}
\draw[color=black] (.0,1.7) node[right] {$1$};
\draw[color=black] (1.0,.1) node[above] {$1$};
\end{scriptsize}
\end{tikzpicture}
\caption{\label{fig:graphh} Graph of $h$.}
\end{minipage} 
\end{figure}
%%%%% END FIGURES %%%% 

When studying $P(i,j)$, there is no obstacle 
to work with arbitrary parameters $i,j\in \Z$. 

\begin{proposition}\label{prop:symmetries} Let $P_j(i)\coloneqq P(i,j)$, then
\begin{align}
\check{P}&=\uptau_{-d+1}P && \text{(symmetry about }y=(d-1)/2),\label{eq:symP}\\
-\check{P}_j&=\uptau_{-d}P_j && \text{(symmetry about }d/2).\label{eq:symPj}
\end{align}
\end{proposition}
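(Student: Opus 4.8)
The plan is to read off both symmetries directly from the symmetries of the building blocks $f$, $g_\frac{1}{2}$, $h$ recorded in Lemma \ref{lem:symmetries}, using nothing but the formal identities for $\check{\,}$ and $\uptau_t$ together with the invariance $\bra f_1,f_2\ket=\bra\check{f_1},\check{f_2}\ket=\bra\uptau_tf_1,\uptau_tf_2\ket$ of the pairing. The key observation is that, as functions of $i$, both $P(i)$ and $P(i,j)$ are assembled from pairings of the form $\bra f,\uptau_i\psi\ket$ with $\psi\in\{g_\frac{1}{2},h\}$ (the formulae being valid for all $i,j\in\Z$), so it suffices to understand how such a pairing transforms under $i\mapsto-i$.

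For \eqref{eq:symP} I would compute $\bra f,\uptau_ig_\frac{1}{2}\ket$ by first dualizing, $\bra f,\uptau_ig_\frac{1}{2}\ket=\bra\check f,(\uptau_ig_\frac{1}{2})\check{\,}\ket$, and then substituting $\check f=-\uptau_{-d}f$ together with $(\uptau_ig_\frac{1}{2})\check{\,}=\uptau_{-i}\check{g}_\frac{1}{2}=-\uptau_{-i-1}g_\frac{1}{2}$, all from Lemma \ref{lem:symmetries}. The two minus signs cancel, and recentering the pairing by $\uptau_d$ gives $\bra f,\uptau_ig_\frac{1}{2}\ket=\bra f,\uptau_{d-1-i}g_\frac{1}{2}\ket$, i.e.\ $P(i)=P(d-1-i)$. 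Replacing $i$ by $-i$ then yields $\check P(i)=P(-i)=P(d-1+i)=\uptau_{-d+1}P(i)$, which is exactly \eqref{eq:symP}.

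For \eqref{eq:symPj} I would factor $P(i,j)=g_\frac{1}{2}(j)\,Q(i)$ with $Q(i)=-\frac{2}{q}f(i)+\frac{(q-1)}{q^2}\bra f,\uptau_ih\ket$, so that the constant $g_\frac{1}{2}(j)$ is irrelevant to the symmetry in $i$ and it is enough to prove $-\check Q=\uptau_{-d}Q$. The first summand inherits this central symmetry immediately from $-\check f=\uptau_{-d}f$. For the second, the same dualization as above — now using $\check h=h$ in place of the $g_\frac{1}{2}$ identity — gives $\bra f,\uptau_ih\ket=-\bra f,\uptau_{d-i}h\ket$; writing $R(i)=\bra f,\uptau_ih\ket$ and replacing $i$ by $-i$ turns this into $-\check R=\uptau_{-d}R$. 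Since the relation $-\check{\varphi}=\uptau_{-d}\varphi$ is preserved under linear combinations, $Q$ satisfies it, hence so does $P_j=g_\frac{1}{2}(j)\,Q$.

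The computations are entirely formal, so there is no genuine obstacle; the only point requiring care is the sign bookkeeping. Specifically, one must track how the sign-reversing (central) symmetry of $f$ combines with the \emph{central} symmetry of $g_\frac{1}{2}$ to produce an \emph{axial} symmetry of $P$, but with the \emph{axial} symmetry $\check h=h$ of $h$ to produce a \emph{central} symmetry of $P_j$. It is also worth noting that the shift by $1$ in \eqref{eq:symP} originates precisely from the half-integer center $\frac{1}{2}$ of $g_\frac{1}{2}$, reflecting that the parameter $i$ of an aligned edge records its origin $\sigma(i)$ rather than a projection, whereas in \eqref{eq:symPj} the parameter $i$ records the projection $\sigma(i)=\tau(j)$ of $e$ onto $[x,y]$, giving a symmetry centered exactly at the barycenter $\sigma(d/2)$.
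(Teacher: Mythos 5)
Your proposal is correct and follows essentially the same route as the paper: dualize the pairing $\bra f,\uptau_i\psi\ket$ via $\bra f_1,f_2\ket=\bra\check{f_1},\check{f_2}\ket$, substitute the symmetries of $f$, $g_\frac{1}{2}$, $h$ from Lemma \ref{lem:symmetries}, and recenter with $\uptau_d$; your sign bookkeeping, including the shift by $1$ coming from the half-integer center of $g_\frac{1}{2}$, checks out. For \eqref{eq:symPj} the paper merely says ``similarly,'' and your explicit factorization $P(i,j)=g_\frac{1}{2}(j)Q(i)$ with the linearity observation is a faithful (and correctly executed) filling-in of that step.
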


\begin{proof}
Using the above and \eqref{eq:IntegrationFormulae}, the following computation
\begin{align*}
\frac{q}{q-1}\check{P}(i)&=\bra f,\uptau_{-i}g_\frac{1}{2}\ket
=\bra f,\check{\uptau}_{i}g_\frac{1}{2}\ket=\bra \check{f},\uptau_{i}\check{g}_\frac{1}{2}\ket
=\bra \uptau_{-d}f,\uptau_{i-1}g_\frac{1}{2}\ket=\bra f,\uptau_{d+i-1}g_\frac{1}{2}\ket\\
&= \frac{q}{q-1}P(d+i-1)= \frac{q}{q-1}\uptau_{-d+1}P(i),
\end{align*}
shows $\check{P}=\uptau_{-d+1}P$ which means $P$ is symmetric about $y=(d-1)/2$.
Similarly one obtains $-\check{P}_j=\uptau_{-d}P_j$. 
\end{proof}

\begin{corollary}\label{cor:symmbarycenter} Let $e$ be an edge with parameter $i\in \Z$, then $\Poisson{B(x,y)}(e)=\Poisson{B(x,y)}(e')$ for all edge $e'$ with parameters $d-i-1$. On the other hand if $e$ has parameter $(i,j)$, with $1< i<d$ and $j\geq 1$, then $\Poisson{B(x,y)}(e)=-\Poisson{B(x,y)}(e')$ for all edge $e'$ with parameters~$(d-i,j)$\qed
\end{corollary}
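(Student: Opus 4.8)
The plan is to deduce the statement directly from the symmetry identities of Proposition \ref{prop:symmetries}, unwinding the operator notation $\uptau$ and $\check{\,}$. First I would recall that by Propositions \ref{prop:SumCase1} and \ref{prop:SumCase2} the value $\Poisson{B(x,y)}(e)$ depends only on the parameter of $e$: it equals $P(i)$ when $e$ has parameter $i$, and $P(i,j)=P_j(i)$ when $e$ has parameters $(i,j)$. In particular any two edges sharing the same parameter have equal Poisson transform, which is precisely why the statement may quantify over \emph{all} edges $e'$ with the prescribed parameter.

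For the aligned case \eqref{itm:A}, I would translate $\check{P}=\uptau_{-d+1}P$ into pointwise form. Since $\check{P}(i)=P(-i)$ and $(\uptau_{-d+1}P)(i)=P(i+d-1)$, the identity reads $P(-i)=P(i+d-1)$ for all $i\in\Z$; replacing $i$ by $-i$ gives $P(i)=P(d-1-i)$. Hence if $e$ has parameter $i$ and $e'$ has parameter $d-i-1$, then $\Poisson{B(x,y)}(e)=P(i)=P(d-1-i)=\Poisson{B(x,y)}(e')$, as claimed. For the non-aligned case \eqref{itm:B} I would proceed identically with $-\check{P}_j=\uptau_{-d}P_j$: here $\check{P}_j(i)=P_j(-i)=P(-i,j)$ and $(\uptau_{-d}P_j)(i)=P_j(i+d)=P(i+d,j)$, so the identity becomes $-P(-i,j)=P(i+d,j)$, and replacing $i$ by $-i$ yields $P(i,j)=-P(d-i,j)$. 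Thus for $e$ of parameter $(i,j)$ and $e'$ of parameter $(d-i,j)$ one gets $\Poisson{B(x,y)}(e)=P(i,j)=-P(d-i,j)=-\Poisson{B(x,y)}(e')$.

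There is essentially no obstacle here beyond bookkeeping, since the entire analytic content already resides in Proposition \ref{prop:symmetries}. The only care needed is the correct unwinding of $\uptau_t$ and $\check{\,}$ — in particular tracking the sign and the shift in the argument, using $\uptau_t f(x)=f(x-t)$ — together with the observation that the Poisson transform factors through the parameter. The one point worth double-checking is that the geometric pairing is well posed on the nose: an edge of parameter $i\in\Z$ in case \eqref{itm:A} is matched with a genuine edge of parameter $d-i-1$, and an edge of parameters $(i,j)$ with $1<i<d$ and $j\geq 1$ is matched with an edge of parameters $(d-i,j)$ lying in the same admissible range, so that the reflected edge $e'$ indeed exists. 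This is exactly the ``opposite position relative to the barycenter'' described in Section \ref{Ontheproof}: the fixed points $y=(d-1)/2$ and $d/2$ of the two symmetries are the positions of the barycenter of $[x,y]$ as seen through the origin-based and projection-based parametrizations respectively.
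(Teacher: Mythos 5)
Your proposal is correct and matches the paper exactly: the corollary is stated there with an immediate \qed because it is precisely the pointwise unwinding of $\check{P}=\uptau_{-d+1}P$ and $-\check{P}_j=\uptau_{-d}P_j$ from Proposition \ref{prop:symmetries}, combined with the fact that the integration formulae of Propositions \ref{prop:SumCase1} and \ref{prop:SumCase2} depend only on the parameters. Your bookkeeping of $\uptau_t$ and $\check{\,}$ (giving $P(i)=P(d-i-1)$ and $P(i,j)=-P(d-i,j)$) and your check that the reflected parameter stays in the admissible range are both accurate.
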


\subsection{Averaging $P$}\label{subsection:P} This section establishes the bound \eqref{eq:parai} using an average of $P$.
The symmetry of $P$ is encoded by equation \eqref{eq:symP} of Proposition \ref{prop:symmetries}, which can also be written $P=\uptau_{d-1}\check{P}.$ The mean of the two sides of the latter is
\begin{equation*}
P(i)=\frac{1}{2}(P(i)+P(d-i-1))=\frac{q-1}{2q}\left(\bra f,\uptau_ig_\frac{1}{2}\ket+\bra f,\uptau_{d-i-1}g_\frac{1}{2}\ket\right).
\end{equation*}
By transferring the operators $\uptau$ to the left side of the pairing, one obtains
\begin{equation*}
P(i)=\frac{q-1}{2q}\left(\bra \uptau_{-i}f,g_\frac{1}{2}\ket+\bra \uptau_{-d+i+1}f,g_\frac{1}{2}\ket\right).
\end{equation*}
Consider the linear operator $T_i\coloneqq\frac{1}{2}(\uptau_{-i}+\uptau_{-d+i+1})$; the previous equation becomes
\begin{equation}\label{eq:PiAverage}
P(i)=\frac{q-1}{q}\bra T_if,g_\frac{1}{2}\ket.
\end{equation}
This has the advantage of being computable, thanks to $f$ being piecewise affine, yielding a simple expression for $T_if$. Since $f$ has a symmetry around $d/2$, one verifies that $T_if$ has one around $1/2$. Indeed, 
\begin{align*}
(T_if)\check{\,}&=\frac{1}{2}\left((\uptau_{-i}f)\check{\,}+(\uptau_{-d+i+1}f)\check{\,}\right)=\frac{1}{2}\left(\check{\uptau}_{-i}\check{f}+\check{\uptau}_{-d+i+1}\check{f}\right)\\
&=-\frac{1}{2}\left(\uptau_{i}\uptau_{-d}f+\uptau_{d-i-1}\uptau_{-d}f\right)=-\frac{1}{2}\uptau_{-1}\left(\uptau_{-d+i+1}f+\uptau_{-i}f\right)
=-\uptau_{-1}T_if.\nonumber
\end{align*}
This symmetry is key to proving
\begin{proposition}\label{prop:Pinorm} With the above notations,
\begin{equation}\label{eq:Pinorm}
P(i)=\frac{2(q-1)}{q}\norm{T_if \cdot g_\frac{1}{2}}_{\ell^1(\N^*)}.
\end{equation}
\end{proposition}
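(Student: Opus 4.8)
The plan is to start from the averaged formula \eqref{eq:PiAverage}, namely $P(i)=\frac{q-1}{q}\bra T_if,g_\frac{1}{2}\ket$, and to show that the signed series on the right is in fact \emph{twice} a sum of non-negative terms, which is exactly the claimed $\ell^1$-norm. The whole argument rests on two central symmetries sharing the common center $1/2$: the relation $-\check{g}_\frac{1}{2}=\uptau_{-1}g_\frac{1}{2}$ of Lemma \ref{lem:symmetries}, and the relation $(T_if)\check{\,}=-\uptau_{-1}T_if$ for $T_if$ established just above the statement.

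First I would record the pointwise form of these symmetries, $g_\frac{1}{2}(-k)=-g_\frac{1}{2}(k+1)$ and $(T_if)(-k)=-(T_if)(k+1)$, and multiply them. The two sign flips cancel, so the product $k\mapsto (T_if)(k)\,g_\frac{1}{2}(k)$ satisfies $(T_if)(-k)g_\frac{1}{2}(-k)=(T_if)(k+1)g_\frac{1}{2}(k+1)$, that is, it is \emph{axially} symmetric about $1/2$ (invariant under $k\mapsto 1-k$). This involution fixes no integer and pairs $\{k\geq 1\}$ bijectively with $\{k\leq 0\}$, so folding the sum over $\Z$ gives
\[
\bra T_if,g_\frac{1}{2}\ket=\sum_{k\in\Z}(T_if)(k)g_\frac{1}{2}(k)=2\sum_{k\geq 1}(T_if)(k)g_\frac{1}{2}(k).
\]

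The main point is then to check that each summand with $k\geq 1$ is non-negative, which is what upgrades the folded sum to the $\ell^1(\N^*)$-norm. For the factor $g_\frac{1}{2}$ this is immediate: by definition $g_\frac{1}{2}(k)=-g(k-1)=-q^{-(k-1)}<0$ for $k\geq 1$. For the factor $T_if=\frac{1}{2}(\uptau_{-i}+\uptau_{-d+i+1})f$ I would use the central symmetry $-\check{f}=\uptau_{-d}f$ of Lemma \ref{lem:symmetries}, in the pointwise form $f(d-y)=-f(y)$, to rewrite
\[
(T_if)(k)=\tfrac{1}{2}\bigl(f(k+i)+f(k+d-i-1)\bigr)=\tfrac{1}{2}\bigl(f(k+i)-f(i+1-k)\bigr).
\]
Since $f$ is non-increasing and $k+i>i+1-k$ for every integer $k\geq 1$, the bracket is $\leq 0$, hence $(T_if)(k)\leq 0$. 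Therefore $(T_if)(k)g_\frac{1}{2}(k)\geq 0$ for all $k\geq 1$, and replacing each term by its absolute value yields
\[
\bra T_if,g_\frac{1}{2}\ket=2\sum_{k\geq1}\abs{(T_if)(k)g_\frac{1}{2}(k)}=2\norm{T_if\cdot g_\frac{1}{2}}_{\ell^1(\N^*)}.
\]
Multiplying by $\frac{q-1}{q}$ and invoking \eqref{eq:PiAverage} gives \eqref{eq:Pinorm}.

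The only delicate step is the sign determination, and it is precisely where the monotonicity of the piecewise affine $f$ enters; everything else is bookkeeping of the two symmetries about the common center $1/2$. I expect no genuine obstacle, but care is needed at the boundary index $k=1$, where one must verify that the inequality $k+i>i+1-k$ (equivalently $2k>1$) is \emph{strict} so that the folding is a fixed-point-free bijection and the term-by-term sign argument is clean.
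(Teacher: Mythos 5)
Your proof is correct and takes essentially the same approach as the paper: starting from \eqref{eq:PiAverage}, it combines the sign agreement of $T_if$ and $g_{\frac{1}{2}}$ with their shared central symmetry about $1/2$ to fold $\bra T_if,g_{\frac{1}{2}}\ket$ into $2\norm{T_if\cdot g_{\frac{1}{2}}}_{\ell^1(\N^*)}$, which are precisely the paper's steps (i) and (ii). The only differences are cosmetic --- you fold first and then check signs on $k\geq 1$ explicitly via $f(d-y)=-f(y)$ and the monotonicity of $f$, whereas the paper determines the sign pattern of $T_if$ first and folds afterwards --- and your conclusion $(T_if)(k)\leq 0$ is actually the more accurate statement, since $T_if$ can vanish beyond $1/2$ when $i<0$ or $i\geq d$.
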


\begin{proof} Translates of $f$ change sign only at their center of symmetry, so that the same holds for $T_if$ which in turn has the same sign as $g_\frac{1}{2}$. Precisely, $T_if(x)\geq0$ for $x\leq1/2$ and $T_if(x)<0$ otherwise. From \eqref{eq:PiAverage}, we have
\begin{equation*}
\frac{q}{q-1}P(i)=\bra T_if,g_\frac{1}{2}\ket\overset{(\text{i})}{=}\bra \abs{T_if},\abs{g_\frac{1}{2}}\ket=\norm{T_if \cdot g_\frac{1}{2}}_{\ell^1(\Z)}\overset{(\text{ii})}{=}2\norm{T_if \cdot g_\frac{1}{2}}_{\ell^1(\N^*)},
\end{equation*}
where (i) follows from $T_if$ and $g_\frac{1}{2}$ having the same sign and (ii) from the fact that the pointwise product preserves the shared symmetry about $1/2$.
\end{proof}

\begin{proposition}[Proposition 4.3.18, \cite{DumontThese}]\label{prop:tifBound} For every $k\in\N^*$, the average
\begin{equation*}
(T_if)(k)=\frac{1}{2}(f(k+i)+f(k+d-i-1))
\end{equation*}
is non-negative and bounded above by: 
\begin{equation}\label{eq:tfi}
T_if(k)\leq
\begin{cases} 
(k-\abs{i})\cdot\indic_{[-i,\infty)}(k) & \text{ if }  i<0,\\
2(k-\frac{1}{2}) & \text{ if } 0\leq i < d, \\
(k-(i-d+1))\cdot\indic_{[-d+i+1,\infty)}(k) & \text{ if } d\leq i.
\end{cases}
\end{equation}
Therefore the $\ell^1(\N^*)$-norm of the pointwise product of $T_if$ and $g_\frac{1}{2}$ is bounded by: 
\begin{equation}\label{eq:normtfi}
\norm{T_if\cdot g_\frac{1}{2}}_{\ell^1(\N^*)}\leq
\begin{cases} 
\frac{q}{(q-1)^2}q^{-(\abs{i}-1)} & \text{ if }  i<0,\\
\frac{q(q+1)}{(q-1)^2} & \text{ if } 0\leq i < d, \\
\frac{q}{(q-1)^2}q^{-(i-d)} & \text{ if } d\leq i,
\end{cases}
\end{equation}
which, applied to \eqref{eq:Pinorm}, proves \eqref{eq:parai}.
\end{proposition}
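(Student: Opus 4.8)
The plan is to compute $(T_if)(k)$ in closed form from the clamped-linear shape of $f$ in \eqref{eq:f} and then read off both the pointwise bound \eqref{eq:tfi} and, after pairing with $g_\frac{1}{2}$, the $\ell^1$-bound \eqref{eq:normtfi}. The only structural input is that $f$ equals $d$ on $(-\infty,0]$, has slope $-2$ on $[0,d]$, and equals $-d$ on $[d,\infty)$; hence $T_if=\frac{1}{2}(\uptau_{-i}f+\uptau_{-d+i+1}f)$ is again continuous and piecewise affine with slopes in $\{-2,-1,0\}$. By the symmetry $(T_if)\check{\,}=-\uptau_{-1}T_if$ computed just before Proposition \ref{prop:Pinorm}, it vanishes at its center $x=\tfrac{1}{2}$ and is monotone non-increasing, so on $\N^*$ it has a single sign, namely that of $g_\frac{1}{2}$ (both are $\le 0$ there, since $g_\frac{1}{2}(k)=-q^{-(k-1)}$ for $k\ge 1$). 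This is precisely the sign agreement used in the proof of Proposition \ref{prop:Pinorm}: the summand $T_if\cdot g_\frac{1}{2}$ is non-negative on $\N^*$, equivalently $\abs{(T_if)(k)}=-(T_if)(k)\ge 0$, and it is this magnitude that \eqref{eq:tfi} bounds. In particular $(T_if)(k)\le 0$ stays below the non-negative right-hand side of \eqref{eq:tfi}, so the displayed upper bound holds.

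I would then evaluate $(T_if)(k)=\frac{1}{2}\bigl(f(k+i)+f(k+d-i-1)\bigr)$ for $k\in\N^*$ by locating each argument relative to the breakpoints $0$ and $d$, splitting into $i<0$, $0\le i<d$, and $d\le i$. In the middle regime both translates overlap on the affine part, the two slopes combine to $-2$, and since the center value is $0$ one gets $(T_if)(k)=-(2k-1)$ as long as both arguments lie in $[0,d]$, with strictly smaller magnitude once either crosses a breakpoint and $f$ flattens to $\pm d$; this gives $\abs{(T_if)(k)}\le 2(k-\tfrac{1}{2})$. In the regime $i<0$ the argument $k+d-i-1$ already exceeds $d$ for every $k\ge 1$, so $f(k+d-i-1)=-d$ is pinned and only $f(k-\abs{i})$ varies: it equals $d$ until $k$ reaches $\abs{i}$ and then decreases with slope $-2$, producing $\abs{(T_if)(k)}\le (k-\abs{i})\,\indic_{[-i,\infty)}(k)$ (with the value eventually clamped at $d$, which is dominated by the same affine formula). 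The regime $d\le i$ is the mirror image under the reflection $i\mapsto d-1-i$ of Proposition \ref{prop:symmetries}, reducing to the previous computation and yielding $(k-(i-d+1))\,\indic_{[-d+i+1,\infty)}(k)$.

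For \eqref{eq:normtfi} I would use $\abs{g_\frac{1}{2}(k)}=q^{-(k-1)}$ and the sign agreement to write $\norm{T_if\cdot g_\frac{1}{2}}_{\ell^1(\N^*)}=\sum_{k\ge 1}\abs{(T_if)(k)}\,q^{-(k-1)}$, then insert the pointwise bounds. In the middle regime the reindexing $m=k-1$ and the elementary sums $\sum_{m\ge 0}q^{-m}=\frac{q}{q-1}$ and $\sum_{m\ge 0}mq^{-m}=\frac{q}{(q-1)^2}$ give $\sum_{m\ge 0}(2m+1)q^{-m}=\frac{q(q+1)}{(q-1)^2}$. In the outer regimes the support begins at $k=\abs{i}$ (resp.\ $k=i-d+1$), so the substitution $m=k-\abs{i}$ (resp.\ $m=k-(i-d+1)$) factors out $q^{-(\abs{i}-1)}$ (resp.\ $q^{-(i-d)}$) times $\sum_{m\ge 0}mq^{-m}=\frac{q}{(q-1)^2}$, delivering the two exponential bounds. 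Feeding \eqref{eq:normtfi} into $P(i)=\frac{2(q-1)}{q}\norm{T_if\cdot g_\frac{1}{2}}_{\ell^1(\N^*)}$ from \eqref{eq:Pinorm} then yields \eqref{eq:parai}.

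The main obstacle is the bookkeeping in the pointwise step: one must track, as $k$ increases, exactly when each of $k+i$ and $k+d-i-1$ crosses a breakpoint of $f$, since the combined slope of $T_if$ drops from $-2$ to $-1$ to $0$ there, and then verify that the resulting clamped expression is dominated by the single affine function recorded in \eqref{eq:tfi} (the bound deliberately discards the final flattening to $-d$, trading tightness for a clean formula that sums to a geometric series). The central symmetry about $\tfrac{1}{2}$ together with the reflection $i\mapsto d-1-i$ is what keeps this analysis short, collapsing the three regimes into essentially one computation and its mirror.
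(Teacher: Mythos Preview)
Your proposal is correct and follows essentially the same route as the paper: write $T_if$ explicitly from the piecewise-affine structure of $f$, track the breakpoints to see that $T_if$ vanishes on $[1/2,-i]$ (resp.\ $[1/2,i-d+1]$) in the outer regimes and has slope $-2$ near the center, bound $\abs{T_if}$ by the affine functions in \eqref{eq:tfi}, and then sum against $\abs{g_\frac{1}{2}(k)}=q^{-(k-1)}$ via standard geometric series. Your reduction of the case $d\le i$ to $i<0$ via the reflection $i\mapsto d-1-i$ is exactly the symmetry the paper exploits, and your explicit evaluation of $\sum_{m\ge 0}(2m+1)q^{-m}$ and $\sum_{m\ge 0}mq^{-m}$ fills in the ``direct computations'' the paper leaves to the reader.

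One point worth flagging: you correctly observe that $(T_if)(k)\le 0$ on $\N^*$ (it vanishes at $1/2$ and is non-increasing), which is at odds with the word ``non-negative'' in the statement. The paper's own proof of Proposition~\ref{prop:Pinorm} confirms your sign analysis (``$T_if(x)<0$'' for $x>1/2$), so the statement should be read as bounding $\abs{T_if(k)}$; your interpretation is the right one and is what makes \eqref{eq:normtfi} and hence \eqref{eq:parai} go through.
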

\begin{proof} For the first part \eqref{eq:tfi}, one writes $f$ as a sum of affine and constant functions each supported on an interval determined by the two cut points of $f$, namely $0$ and~$d$. Then one may compute explicitly the translates of $f$ and $T_if$ in terms of affine and indicator functions. As a result of some cancelations, $T_if$ vanishes on the interval $[1/2,-i]$ when $i<0$ and on $[1/2, -d+i+1]$ when $d\leq i$. It follows that $\abs{T_if}$ is bounded by the affine functions of \eqref{eq:tfi}. The bounds are asymptotically sharp in the sense that $\abs{T_if}$ converge pointwise to them as $d\to\infty$.
The second part \eqref{eq:normtfi} is deduced by direct computations by plugging \eqref{eq:tfi} into \eqref{eq:Pinorm}. 
\end{proof}

This spherical rearrangement for the edges of $[x,y]$ is sufficient to prove the lower bound of Theorem \ref{MainThm}.
\begin{proof}[Proof of the lower bound of Theorem \ref{MainThm}] As mentioned in the previous proof, $T_if$ does not vanish around $1/2$ when $0\leq i <d$. In fact, for such parameters $i$, one sees that $\abs{T_if(k)}\geq 1$ for all $k\in \Z$. 
Hence by \eqref{eq:Pinorm},
\begin{equation*}
\Poisson{B(x,y)}(e)=\frac{2(q-1)}{q}\norm{T_if\cdot g_\frac{1}{2}}_{\ell^1(\N^*)}\geq \frac{2(q-1)}{q}\norm{g_\frac{1}{2}}_{\ell^1(\N^*)}=2,
\end{equation*}
%Hence $\Poisson{B(x,y)}(e)\sim\norm{T_if\cdot g_\frac{1}{2}}_{\ell^1}$ is bounded away from zero 
for all edges of $[x,y]$ pointing from $x$ to $y$. 
Applying Cauchy-Schwarz to the indicator function of the later edges and $\Poisson{B(x,y)}$ yields
\begin{equation*}
\sum_{e\subset [x,y]} \Poisson{B(x,y)}(e)\leq \sqrt{d}\norm{\Poisson{B(x,y)}}.
\end{equation*}
The left hand side is at least $2d$, hence the result.
\end{proof}

\subsection{Averaging $P_j=P(-,j)$} \label{subsection:Pj}We proceed similarly to last section in order to establish \eqref{eq:paraij}, with a twist in complexity due to the particular form of the integration formula \eqref{eq:IntegrationFormulae} for~$P(i,j)$. We may focus on  the case $1\leq i\leq d/2$ thanks to the symmetry \eqref{eq:symPj} of~$P_j$. The latter translates into $P_j=-\uptau_{d}\check{P}_j$. Therefore,
\begin{align*}
P_j(i)&=\frac{1}{2}(P_j(i)-P_j(d-i))=-\frac{2}{q}f(i)g_\frac{1}{2}(j)+\frac{(q-1)}{2q^2}g_\frac{1}{2}(j)\left(\bra f,\uptau_ih\ket-\bra f,\uptau_{d-i}h\ket\right)\\
&=-\frac{2}{q}f(i)g_\frac{1}{2}(j)+\frac{(q-1)}{2q^2}g_\frac{1}{2}(j)\bra \uptau_{-i}f-\uptau_{i-d}f,h\ket.
\end{align*}
We consider the linear operator $\tilde{T}_i\coloneqq\frac{1}{2}(\uptau_{-i}-\uptau_{i-d})$ to rewrite the above as:
\begin{equation}\label{eq:PijAverage}
P_j(i)=-\frac{2}{q}f(i)g_\frac{1}{2}(j)+\frac{(q-1)}{q^2}g_\frac{1}{2}(j)\bra \tilde{T}_if,h\ket.
\end{equation}
Here $\tilde{T}_if$ is symmetric with respect to the axis $y=0$; for one quickly verifies $(\tilde{T}_if)\check{\,}=\tilde{T}_if$.

\begin{proposition}[Section 4.4.1, \cite{DumontThese}] 
For $1\leq i\leq d/2$,  we have $\tilde{T}_if\geq0$ and
\begin{equation*}%\label{eq:Pijnorm}
\bra\tilde{T}_if,h\ket=2\norm{\tilde{T}_if\cdot h}_{\ell^1(\N^*)}.
\end{equation*}
Moreover
\begin{equation}\label{eq:finalPj}
P_j(i)=\frac{-2q^{-i}}{q-1}g_\frac{1}{2}(j)(1-q^{-f(i)}),
\end{equation}
Since $f(i)=d-2i\geq0$, the absolute value of \eqref{eq:finalPj} yields \eqref{eq:paraij}.
\end{proposition}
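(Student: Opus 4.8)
The plan is to handle the three assertions in turn, exploiting that the averaging operator $\tilde{T}_i=\frac{1}{2}(\uptau_{-i}-\uptau_{i-d})$ turns the central antisymmetry of $f$ about $d/2$ into an even, non-negative profile. For the first assertion, write $\tilde{T}_if(x)=\frac{1}{2}\bigl(f(x+i)-f(x+d-i)\bigr)$; since $f$ is non-increasing and $1\le i\le d/2$ forces $d-i\ge i$, we get $f(x+i)\ge f(x+d-i)$ for every $x$, whence $\tilde{T}_if\ge0$. For the pairing identity I would combine the evenness $(\tilde{T}_if)\check{\,}=\tilde{T}_if$ recorded just after \eqref{eq:PijAverage} with $\check{h}=h$ (Lemma \ref{lem:symmetries}), the non-negativity $h\ge0$, and $h(0)=0$: the product $\tilde{T}_if\cdot h$ is then a non-negative even sequence vanishing at the origin, so summing over $\Z$ doubles the sum over $\N^*$, giving $\bra\tilde{T}_if,h\ket=2\norm{\tilde{T}_if\cdot h}_{\ell^1(\N^*)}$.

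The core of the proof is the closed form \eqref{eq:finalPj}, for which I would first compute $\tilde{T}_if$ explicitly. As $f$ is piecewise affine with breakpoints at $0$ and $d$, the function $\tilde{T}_if$ is piecewise affine with breakpoints at $\pm i$ and $\pm(d-i)$; a direct case analysis over the five resulting intervals yields the symmetric trapezoid
$$\tilde{T}_if(x)=\begin{cases} 0 & x\le i-d,\\ d+x-i & i-d\le x\le -i,\\ d-2i & -i\le x\le i,\\ d-x-i & i\le x\le d-i,\\ 0 & x\ge d-i,\end{cases}$$
which is flat at the peak value $f(i)=d-2i$ on $[-i,i]$ and vanishes outside $[-(d-i),d-i]$. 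The continuity checks at $x=\pm i$ and $x=\pm(d-i)$ confirm the gluing.

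Next I would evaluate the pairing. For $k\ge1$ one has $h(k)=g(k-1)=q^{-(k-1)}$, so by the previous step $\bra\tilde{T}_if,h\ket=2\sum_{k\ge1}\tilde{T}_if(k)\,q^{-(k-1)}$. Splitting this sum into the constant range $1\le k\le i$ (a finite geometric series) and the linear range $i<k\le d-i$, the latter reduces under $m=k-i$ to $q^{1-i}\sum_{m=1}^{f(i)}\bigl(f(i)-m\bigr)q^{-m}$, a combination of a geometric series and the arithmetic-geometric series $\sum_m mq^{-m}$. After substituting the standard closed forms and simplifying, the bulk of the terms cancel and one is left with $\bra\tilde{T}_if,h\ket=\frac{2q}{q-1}f(i)-\frac{2q^{2-i}}{(q-1)^2}\bigl(1-q^{-f(i)}\bigr)$. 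Feeding this into \eqref{eq:PijAverage}, the two contributions $\pm\frac{2}{q}f(i)g_\frac{1}{2}(j)$ cancel exactly and leave precisely \eqref{eq:finalPj}.

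Finally, to recover \eqref{eq:paraij} I would take absolute values in \eqref{eq:finalPj}. Since $f(i)=d-2i\ge0$ we have $0\le1-q^{-f(i)}\le1$, and for $j\ge1$ one has $\abs{g_\frac{1}{2}(j)}=q^{-(j-1)}$; hence $\abs{P_j(i)}\le\frac{2}{q-1}q^{-(i+j-1)}$, the first case of \eqref{eq:paraij}. The second case $d/2\le i\le d-1$ then follows from the symmetry \eqref{eq:symPj}, i.e.\ $\abs{P_j(i)}=\abs{P_j(d-i)}$ with $d-i\le d/2$. The main obstacle is the series evaluation above: one must carry the breakpoints of the trapezoid and the arithmetic-geometric sum through a somewhat delicate simplification in which nearly all $q^{-i}$-weighted terms cancel, leaving only the clean two-term expression; keeping this bookkeeping correct (including the continuity at the junctions) is what makes \eqref{eq:finalPj} fall out.
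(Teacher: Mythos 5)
Your proposal is correct and takes essentially the same route as the paper: the explicit trapezoidal form of $\tilde{T}_if$, the doubling of the pairing via evenness and $h(0)=0$, the split into a geometric plus an arithmetic--geometric series, and the exact cancellation of the $\frac{2}{q}f(i)g_{\frac{1}{2}}(j)$ terms all mirror the paper's computation, and your intermediate value $\bra\tilde{T}_if,h\ket=\frac{2q}{q-1}f(i)-\frac{2q^{2-i}}{(q-1)^2}\bigl(1-q^{-f(i)}\bigr)$ agrees with the paper's $\Delta(i)$, $\Sigma(i)$ bookkeeping. Your monotonicity argument for $\tilde{T}_if\geq0$ and the explicit appeal to \eqref{eq:symPj} for $d/2\leq i\leq d-1$ are only minor streamlinings of steps the paper reads off the piecewise formula or leaves implicit.
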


\begin{proof} We proceed as in Proposition \ref{prop:tifBound}. One may write $\tilde{T}_if=\frac{1}{2}(\uptau_{-i}f-\uptau_{i-d}f)$ explicitly as a piecewise affine map and observe its non-negativeness on its support $[i-d,d-i]$. 
Consequently, 
\begin{equation*}
\bra \tilde{T}_if,h\ket=\norm{\tilde{T}_if\cdot h}_{\ell^1(\Z)}=2\norm{\tilde{T}_if\cdot h}_{\ell^1(\N^*)},
\end{equation*}
where in the last equation we used that $h(0)=0$ and the symmetry about $y=0$. 
For the second part, we use additional notations:
\begin{equation*}%\label{eq:DeltaSigma}
\Delta(i)\coloneqq\norm{\tilde{T}_if\cdot h}_{\ell^1(\N^*)}\quad \text{and}\quad\Sigma(i)\coloneqq\Delta(i)-\frac{q}{q-1}f(i),
\end{equation*}
so that $P_j$ can be written as:
\begin{equation}\label{eq:29}
P_j(i)=\frac{2(q-1)}{q^2}g_\frac{1}{2}(j)\left(\Delta(i)-\frac{q}{q-1}f(i)\right)=\frac{2(q-1)}{q^2}g_\frac{1}{2}(j)\Sigma(i).
\end{equation}
For $k\geq0$, the function $\tilde{T}_if$ is more explicitly described on its support by:
\begin{equation*}
\tilde{T}_if(k)=\begin{cases}
d-2i=f(i) & \text{ if }  0\leq k \leq i,\\
d-i-k=f(i)-(k-i) & \text{ if } i\leq k \leq d-i.
\end{cases}
\end{equation*}
Therefore
\begin{align*}
\Delta(i)&=\norm{\tilde{T}_if\cdot h}_{\ell^1(\N^*)}= \sum_{k=1}^{d-i}f(i)q^{-k+1}-\sum_{k=i}^{d-i}(k-i)q^{-k+1}\\
&=f(i)\sum_{k=1}^{d-i}q^{-k+1}-q^{-i}\sum_{k=0}^{f(i)}kq^{-k+1}.
\end{align*}
We set $x\coloneqq q^{-1}$ and plug the previous equation into $\Sigma(i)$:
\begin{align*}
\Sigma(i)&=\Delta(i)-\frac{1}{1-x}f(i) =f(i)\sum_{k=1}^{d-i}x^{k-1}-x^{i}\sum_{k=0}^{f(i)}kx^{k-1}- \frac{1}{1-x}f(i)\\
&=f(i)\frac{1-x^{d-i}}{1-x}-\frac{x^i}{(1-x)^2}\left(f(i)x^{f(i)+1}-(f(i)+1)x^{f(i)}+1\right)- \frac{1}{1-x}f(i)\\
&=f(i)\frac{-x^{d-i}}{1-x}-\frac{x^i}{(1-x)^2}\left(f(i)x^{f(i)+1}-(f(i)+1)x^{f(i)}+1\right)\\
&=f(i)\frac{-x^{i}x^{f(i)}}{1-x}-\frac{x^i}{(1-x)^2}\left(f(i)x^{f(i)+1}-(f(i)+1)x^{f(i)}+1\right)\\
&=\frac{-x^{i}}{(1-x)^2}\left(f(i)x^{f(i)}(1-x)+f(i)x^{f(i)+1}-(f(i)+1)x^{f(i)}+1\right)\\
&=\frac{-x^{i}}{(1-x)^2}\left(-x^{f(i)}+1\right)=\frac{-q^2q^{-i}}{(q-1)^2}\left(1-q^{-f(i)}\right).
\end{align*}
Finally \eqref{eq:29} becomes
\begin{align*}
P_j(i)&=\frac{2(q-1)}{q^2}g_\frac{1}{2}(j)\Sigma(i)=\frac{2(q-1)}{q^2}g_\frac{1}{2}(j)\frac{-q^2q^{-i}}{(q-1)^2}\left(1-q^{-f(i)}\right)\\
&=\frac{-2q^{-i}}{q-1}g_\frac{1}{2}(j)(1-q^{-f(i)}),
\end{align*}
as desired.
\end{proof}

%%%%%%%% Bibliography %%%%%%%% 

\bibliographystyle{plain}

\bibliography{BibliographyBusemannTree}

\end{document}